\newcommand{\rar}{\rightarrow}
\newcommand{\lar}{\longrightarrow}
\newcommand{\llar}{-\kern-5pt-\kern-5pt\longrightarrow}
\newtheorem{Theorem}{Theorem}[section]
\newtheorem{Lemma}[Theorem]{Lemma}
\newtheorem{Corollary}[Theorem]{Corollary}
\newtheorem{Proposition}[Theorem]{Proposition}
\newtheorem{Definition}[Theorem]{Definition}
\def\sqr#1#2{{\vcenter{\hrule height.#2pt
			\hbox{\vrule width.#2pt height#1pt \kern#1pt
				\vrule width.#2pt}
			\hrule height.#2pt}}}
\def\phi{\varphi}
\DeclareMathOperator{\Spec}{Spec}
\DeclareMathOperator{\rank}{rank}
\DeclareMathOperator{\Ht}{ht}
\def\xx{{\bf x}}
\def\tt{{\bf t}}
\def\fm{{\mathfrak m}}
\def\fn{{\mathfrak n}}
\def\Ht{{\rm ht}\,}
\def\restr{{\kern-1pt\restriction\kern-1pt}}
\def\pp{{\mathbb P}}
\begin{document}
	\begin{center}
		{\Large{\bf\sc   Additions to a  theorem of Morey-Ulrich}}
		\footnotetext{AMS Mathematics
			Subject Classification (2010   Revision). Primary 13A02, 13A30, 13D02, 13H10, 13H15; Secondary  14E05, 14M07, 14M10,  14M12.} 
		\footnotetext{	{\em Key Words and Phrases}:  codimension two perfect ideal, Rees algebra, special fiber,  Cohen--Macaulay, Jacobian dual, expected form, fiber type.}
		
		\vspace{0.1true in}

		\vspace{0.3in}
		{\large\sc Thiago Fiel}\footnote{Supported by a CNPq grant (171302/2023-0)} \quad
	 \quad	
	{\large\sc Zaqueu Ramos}\footnote{Partially
			supported by a CNPq grant (304122/2022-0)} \quad
		{\large\sc Aron  Simis}\footnote{Partially
			supported by a CNPq grant (301131/2019-8).}

	\end{center}
	
	\begin{abstract}
	Let $R = k[x_1,\ldots, x_d] (d\geq 3)$ denote a standard graded polynomial ring over 
	an algebraically closed field $k$, and let $I \subset R$ be a perfect ideal of codimension $2$ with an $n\times (n-1)$ linear presentation matrix. We prove an extended formulation of a theorem of Morey and Ulrich in the case where $I$ satisfies condition $G_{d-1}$, but not condition $G_d$.
	\end{abstract}
	

	\section{Statement of the problem}\label{overture}
	
	Let $R = k[x_1,\ldots, x_d] (d\geq 3)$ denote a standard graded polynomial ring over 
	an infinite field $k$, and let $I \subset R$ be a perfect ideal of codimension $2$ with an $n\times (n-1)$ linear presentation matrix $\phi$, commonly referred to as the (associated) {\em  Hilbert--Burch matrix.}
	Thus, $I$ is generated by the $n$ maximal minors of $\phi$.
	Here we adhere to the habit that, over a Cohen--Macaulay base ring, ``codimension'' and ``height'' are used interchangeably.

Now, there is a unique $d\times(n-1)$-matrix $B$, whose  entries are linear forms in $k[\tt]=k[t_1,\ldots,t_n]$, such that 
	\begin{equation}
		\tt\cdot\varphi=\xx\cdot B.
	\end{equation} 
	$B$ is often called the {\em Jacobian dual matrix} of $\phi$.
	
	S. Morey and B. Ulrich have proved the following beautiful theorem:
	
	\begin{Theorem}\label{MainM-U}{\rm (\cite[Theorem 1.3]{MU})} 
		Let $R$ and $I\subset R=k[x_1,\ldots,x_d]$ be as above, and such that $\mu(I) \geq d+1$. {\sc If $I$ satisfies condition $G_d$}, then$:$
		\begin{enumerate}
			\item[\rm(i)]  The analytic spread $\ell(I)$ is maximal {\rm (}i.e., $\ell(I)=d${\rm )}.
			\item[\rm(ii)]  The reduction number of $I$ has the      {\rm ``}expected\,{\rm ''}  value  {\rm (}i.e., $\ell(I)-1${\rm )}.
			\item[\rm(iii)] The Rees algebra $\mathcal{R}_R(I)$ of $I$ is Cohen--Macaulay with  homogeneous defining ideal of the {\rm ``}expected\,{\rm ''} form $\langle I_1(\tt\cdot\varphi), I_d(B)\rangle$.
		\end{enumerate}
	\end{Theorem}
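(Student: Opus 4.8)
\medskip
\noindent{\bf Proof strategy.} The plan is to establish (iii) first and then deduce (i) and (ii). Write $S=R[\tt]$, let $\mathcal{J}\subseteq S$ be the defining ideal of $\mathcal{R}_R(I)=S/\mathcal{J}$, and set $\mathcal{A}=\langle I_1(\tt\cdot\varphi),\,I_d(B)\rangle$. The inclusion $\mathcal{A}\subseteq\mathcal{J}$ is easy: $I_1(\tt\cdot\varphi)$ defines the symmetric algebra $\mathcal{S}(I)$, which maps onto $\mathcal{R}_R(I)$, and since $\tt\cdot\varphi=\xx\cdot B$ vanishes in $\mathcal{R}_R(I)$ while $\xx=(x_1,\ldots,x_d)$ spans a free summand, Cramer's rule puts $I_d(B)$ into $\mathcal{J}$. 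So everything in (iii) comes down to the reverse inclusion together with Cohen--Macaulayness, and I would extract both from the Jacobian dual criterion: it is enough to show that $S/\mathcal{A}$ is Cohen--Macaulay of dimension $d+1$. Indeed, a perfect ideal of codimension two is strongly Cohen--Macaulay, so $I$ is of linear type on the punctured spectrum of $R$ (strong Cohen--Macaulayness plus $G_d$); checking that $I_d(B)\subseteq(\tt\cdot\varphi)$ after inverting any $x_i$ then shows that $\mathcal{J}/\mathcal{A}$ is killed by a power of each $x_i$, so $\mathcal{A}$ and $\mathcal{J}$ agree outside $V(\fm S)$. A Cohen--Macaulay $S/\mathcal{A}$ is unmixed of codimension $n-1=\codim\mathcal{J}$, and — given that the residual intersection analysis below also rules out components of $V(\mathcal{A})$ inside $V(\fm S)$ — this forces $\mathcal{A}=\mathcal{J}$ and makes $\mathcal{R}_R(I)$ Cohen--Macaulay.

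The heart of the argument, and the step I expect to be the main obstacle, is thus the Cohen--Macaulayness and codimension of $S/\mathcal{A}$, which I would obtain from residual intersection theory. Take $d$ general $k$-linear combinations $a_1,\ldots,a_d$ of the minors generating $I$ and put $J=(a_1,\ldots,a_d)$; condition $G_d$ --- equivalently, that $I_{n-d+1}(\varphi)$ is $\fm$-primary --- gives $\Ht(J:I)\geq d$, so $J:I$ is a $d$-residual intersection of $I$. Since $I$ is strongly Cohen--Macaulay and satisfies $G_d$, it has the relevant Artin--Nagata property (namely $\mathrm{AN}^{-}_{d-1}$), and the residual intersection theory of Huneke--Ulrich and Ulrich then yields that $J:I$ is Cohen--Macaulay of codimension $d$, that $J$ is a reduction of $I$ with $r_J(I)\leq d-1$, and, feeding this into the Jacobian dual criterion, that $\mathcal{R}_R(I)$ is Cohen--Macaulay with defining ideal $\mathcal{A}$. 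Turning the abstract output of this machinery into the explicit generators $I_d(B)$, rather than settling for the existence of a reduction with small reduction number, is the delicate part, and is precisely what the Jacobian dual matrix is there for.

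Granting (iii), the rest is short. Reducing modulo $\fm$ kills $I_1(\tt\cdot\varphi)=I_1(\xx\cdot B)\subseteq(\xx)S$, so the special fiber is $\mathcal{F}(I)=\mathcal{R}_R(I)\otimes_R k=k[\tt]/I_d(B)$. The ideal of maximal minors of a $d\times(n-1)$ matrix has height at most $n-d$, while $\ell(I)=\dim k[\tt]/I_d(B)\leq d=\dim R$ in any case; hence $\Ht I_d(B)=n-d$ and $\ell(I)=d$, proving (i). With $I_d(B)$ of this maximal height the Eagon--Northcott complex resolves $k[\tt]/I_d(B)$, so $\mathcal{F}(I)$ is Cohen--Macaulay of dimension $d$ with resolution linear after the first step, whence $\reg\mathcal{F}(I)=d-1$. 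Finally, for a minimal reduction $J$ generated by $d$ general linear combinations of the minors, $\mathcal{F}(J)\hookrightarrow\mathcal{F}(I)$ is a Noether normalization by $d$ linear forms, so $r_J(I)$ equals the top degree in which $\mathcal{F}(I)$ needs a generator over $\mathcal{F}(J)$, namely $\reg\mathcal{F}(I)=d-1=\ell(I)-1$; this is (ii).
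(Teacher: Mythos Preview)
The paper does not give a proof of this theorem at all: it is quoted verbatim from \cite[Theorem~1.3]{MU} as background, and is only \emph{applied} later (to the specialized ideal $\overline{I}$ in Lemma~\ref{specialization}\,(i)). So there is no ``paper's own proof'' to compare your proposal against.

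That said, your outline is broadly faithful to how Morey and Ulrich actually argue in \cite{MU}. The reduction of (iii) to the Cohen--Macaulayness and codimension of $S/\mathcal{A}$, the use of strong Cohen--Macaulayness of a codimension-two perfect ideal together with $G_d$ to invoke residual intersection theory and the Artin--Nagata property, and the derivation of (i) and (ii) from (iii) via the Eagon--Northcott resolution of the special fiber --- all of this matches their route. The one place where your sketch is genuinely thin is the step you yourself flag as ``delicate'': passing from the abstract output of the residual intersection machinery (a Cohen--Macaulay colon ideal $J{:}I$ and a reduction-number bound) to the explicit equality $\mathcal{A}=\mathcal{J}$ and, in particular, to ruling out components of $V(\mathcal{A})$ inside $V(\fm S)$. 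In \cite{MU} this goes through an explicit identification of $J{:}I$ with a determinantal ideal attached to the chosen generators of $J$ and a comparison with $I_d(B)$; your write-up asserts that the residual analysis ``also rules out'' such components without saying how, so as it stands that step is a pointer rather than an argument.
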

Some authors (\cite{CPW1}, \cite{DRS}, \cite{L1}, \cite{L2}) have considered a weakening of the main hypothesis of this theorem, by replacing it with the assumption that $I$ satisfies condition $G_{d-1}$, but not condition $G_d$, obtaining similar results.
Let us refer to this condition as $(*)$ just for the sake of this introduction and to avoid repetition.

For instance, \cite[Theorem 5.2]{L1} states that, under $(*)$, if $k$ is algebraically closed, $d\geq 3$  arbitrary, and $n=d+1,$ then the defining ideal of $\mathcal{R}_R(I)$ is $\langle I_{1}(\tt\cdot \phi), \det B'\rangle$ where $B'$ is a certain $(d-1)\times (d-1)$ submatrix of the Jacobian dual matrix $B.$

Further, \cite[Theorem 4.6]{L2} -- recovered by different methods in \cite[Theorem 3.6]{DRS} -- states that, under $(*)$, if $k$ is algebraically closed, $d=3$, $n\geq d+1$, and moreover, after a change of variables, the matrix $\phi$ has rank  $1$ modulo the ideal generated by two variables, then  the defining ideal of $\mathcal{R}_R(I)$ is $\langle I_{1}(\tt\cdot \phi), I_2(B')\rangle$ where $B'$ is a certain  $2\times (n-2)$ submatrix of the Jacobian dual matrix $B.$

Still further, \cite[Theorem 3.9]{CPW1} states that, under $(*)$, if $k$ is an arbitrary field, $d\geq 3$, $n\geq d+1$, and upon a change of variables, the matrix $\phi$ has rank  $1$ modulo the ideal $\langle x_1,\ldots,x_{d-1}\rangle,$ then the defining ideal of $\mathcal{R}_R(I)$ is $\langle I_{1}(\tt\cdot \phi), I_{d-1}(B')\rangle$ where $B'$ is a certain  $(d-1)\times (n-2)$ submatrix of the Jacobian dual matrix $B.$

In addition, all the above results prove that the analytic spread is maximal.

A main drive of the present work is  to derive a most comprehensive landscape for condition $(*)$ in the case of a linear matrix $\phi$.
As such, for our purpose an important actor is the following notion introduced in \cite{Herzog_et_al2005}. Let $R$ be a standard graded ring over a field $k$, with irrelevant maximal ideal $\fm$,  and $I\subset R$  an equigenerated ideal thereof. Then the special fiber $\mathcal{F}_R(I)=\mathcal{R}_R(I)/\fm \mathcal{R}_R(I)$ can be identified with the $k$-subalgebra $k[It]\subset R[It]$.
In this way, the homogeneous defining ideal $Q$ of the latter is contained in a defining ideal of the Rees algebra $\mathcal{R}_R(I)$.
One then says that the ideal $I$ (or $\mathcal{R}_R(I)$) is of {\em fiber type} if a defining ideal of $\mathcal{R}_R(I)$ has the form $\langle I_1(\tt\cdot \phi), Q\rangle$.

In this regard, assuming throughout that $k$ is algebraically closed, we prove:

\begin{Theorem}\label{main_thm}
	Let $R$ and $I\subset R=k[x_1,\ldots,x_d]$ be as above, and such that $\mu(I) \geq d+1$ and $I_1(\phi)=\langle x_1,\ldots,x_d \rangle$.   If $I$ satisfies condition $G_{d-1},$ but not condition $G_d,$ then$:$
	\begin{enumerate}
		\item[\rm(i)]  The analytic spread $\ell(I)$ is maximal {\rm (}i.e., $\ell(I)=d${\rm )}.
		\item[\rm(ii)] The Rees algebra $\mathcal{R}_R(I)$ of $I$ is of fiber type, but not of the {\rm ``}expected\,{\rm ''} form. 
	\end{enumerate}
\end{Theorem}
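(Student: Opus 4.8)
The plan is to follow the general strategy of Morey–Ulrich, but to track carefully where condition $G_d$ is used and to replace it by a normalized-coordinates argument coming from the hypothesis $I_1(\phi)=\langle x_1,\ldots,x_d\rangle$. First I would record the consequences of $G_{d-1}$ but-not-$G_d$ together with the linearity of $\phi$: since $I$ fails $G_d$, there is a prime $\fp$ of height $d-1$ (necessarily $\fp=\fm$ is excluded, so a codimension $d-1$ linear prime after a general change of coordinates, say $\fp=\langle x_1,\ldots,x_{d-1}\rangle$) such that $\mu(I_\fp)\geq d$, equivalently the Hilbert–Burch matrix $\phi$ drops rank modulo $\fp$ in the expected minimal way. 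A Bertini/general-position argument then lets me assume $\phi$ has rank exactly $1$ modulo $\langle x_1,\ldots,x_{d-1}\rangle$ and full rank $n-1$ modulo any height-$(d-1)$ prime not containing $x_d$; this is exactly the normal form appearing in \cite{CPW1}, \cite{L2}, \cite{DRS}, so I can quote those defining-ideal descriptions as the ``local model.''

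Next, for (i), I would show $\ell(I)=d$. The containment $\ell(I)\leq d$ is automatic since $\mathcal{F}_R(I)\subset R$ has dimension at most $d=\dim R$; for the reverse, I would compute the special fiber directly. Writing $\mathcal{F}_R(I)=k[It]=k[\Delta_1,\ldots,\Delta_n]$ where the $\Delta_i$ are the maximal minors of $\phi$, linearity of $\phi$ forces each $\Delta_i$ to have degree $n-1$, and the hypothesis $I_1(\phi)=\fm$ means $I$ is ``nondegenerate'' in the sense that its minors generate an ideal of the expected codimension on the relevant loci. The cleanest route is: the analytic spread equals $d$ iff the morphism $\mathbb{P}^{d-1}\dashrightarrow \mathbb{P}^{n-1}$ defined by the $\Delta_i$ has image of dimension $d-1$, and failure would force $I$ to be contained, up to radical, in a proper linear variety, contradicting $I_1(\phi)=\fm$; alternatively one uses that $G_{d-1}$ already forces $\ell(I)\geq d-1$ and the extra generator coming from the $G_d$-failure locus bumps it to $d$. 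I expect (i) to be the easier half.

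For (ii), the heart of the matter, I would first produce the defining ideal of $\mathcal{R}_R(I)$ and then read off both the fiber-type property and the failure of the expected form. The Jacobian dual equation $\tt\cdot\phi=\xx\cdot B$ gives the ``linear'' part $I_1(\tt\cdot\phi)=I_1(\xx\cdot B)$ of the defining ideal for free. The claim is that, modulo this linear part, the defining ideal is generated by the fiber equations $Q$ — i.e., $\mathcal{R}_R(I)=\mathcal{S}_R(I)/(\text{stuff pulled back from }\mathcal{F}_R(I))$. To get this I would use the normalized local model: in the chart where $x_d$ is inverted (equivalently, away from the non-$G_d$ prime) $I$ behaves as in the Morey–Ulrich setting and the Rees algebra is ``expected''; along the bad prime the extra fiber equations appear. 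Patching these — via a local-global / associated-primes argument on $\mathcal{S}_R(I)$, showing $\mathcal{S}_R(I)$ has no relevant associated primes other than those forced by $\mathfrak m$ and the known torsion — yields fiber type. Finally, to see the defining ideal is \emph{not} of the expected form $\langle I_1(\tt\cdot\phi), I_d(B)\rangle$: since $I$ fails $G_d$, the ideal $I_d(B)$ has codimension strictly less than expected (the Jacobian dual drops rank), so $\langle I_1(\tt\cdot\phi), I_d(B)\rangle$ cannot cut out $\mathcal{R}_R(I)$, which has the correct dimension $d+1$; one exhibits an explicit fiber equation of degree $\leq d-1$ (coming from the $I_{d-1}(B')$ description) not lying in $\langle I_1(\tt\cdot\phi), I_d(B)\rangle$.

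The main obstacle I anticipate is the patching step establishing fiber type globally: showing that the only obstructions to $\mathcal{S}_R(I)\to\mathcal{R}_R(I)$ being ``fiber type'' are the fiber equations themselves, rather than some genuinely new hybrid equation mixing the $x_i$'s and $t_j$'s in degree $\geq 2$ in the $t_j$'s. Controlling this requires a careful analysis of the approximation complex or of $\Tor$/$\Ext$ modules of $\mathcal{S}_R(I)$, using the $G_{d-1}$ hypothesis to guarantee the symmetric algebra is ``almost'' a complete intersection outside $V(\fm)$ together with one extra minimal prime, so that all relevant torsion is captured in codimension one on $\Proj$. Once that acyclicity/torsion bookkeeping is in place, both the fiber-type conclusion and the explicit non-expected generator should follow from the known local descriptions in \cite{CPW1} and \cite{L2}.
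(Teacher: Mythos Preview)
Your proposal has a genuine gap at its very first reduction step, and the overall architecture is quite different from the paper's.

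\medskip

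\textbf{The faulty reduction.} You claim that a Bertini/general-position argument lets you assume $\phi$ has rank exactly $1$ modulo the bad linear prime, thereby placing yourself in the situation of \cite{CPW1}, \cite{L2}, \cite{DRS}. This is not true. The rank of $\phi$ modulo a minimal prime of the locus where $G_d$ fails is an intrinsic integer $u$ with $1\leq u\leq n-d$ (the paper calls it, in effect, the chaos number; see Lemma~\ref{petit_everything}(ii)--(iii)), and no change of coordinates alters it. The cited references each treat only a particular value of $u$ (or impose $n=d+1$), so quoting them does not give the general case. Your local-global patching for fiber type is then built on this reduction and is, as you yourself note, the unresolved ``main obstacle''; without the reduction it has no anchor.

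\medskip

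\textbf{What the paper actually does.} For (i), the paper proves something strictly stronger: the rational map $\psi$ is \emph{birational} onto its image. The argument uses the canonical form of $\phi$ (Lemma~\ref{formating_phi}), shows the $(d-1)\times(n-1-u)$ block $B'$ of the Jacobian dual has maximal rank via a Huneke--Rossi dimension computation on $\mathcal{S}_R(E)$ for an auxiliary module $E$ (Lemmas~\ref{rankB'}), and then invokes the birationality criterion of \cite{DHS} together with Tchernev's bound $\mathrm{indeg}(Q)\geq d-1$ (Proposition~\ref{initial-degree}) to certify that a suitable $(d-1)$-minor of $B$ is nonzero modulo $Q$.

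For (ii), the paper does \emph{not} patch charts. Instead it specializes by a general linear form $\mathfrak{f}$: the image $\overline{I}\subset \overline{R}=k[x_2,\ldots,x_d]$ is codimension-two perfect with linear presentation $\overline{\phi}$ and satisfies $G_{d-1}$ in a ring of dimension $d-1$, so Morey--Ulrich applies to $\overline{I}$ and $\overline{\mathcal{J}}=\langle I_1(\tt\cdot\overline{\phi}),\,I_{d-1}(\overline{B})\rangle$. The key identity (Lemma~\ref{specialization}(ii)) is $\det(\overline{\mathcal{B}})=\mathfrak{f}(\boldsymbol{\delta}_{\mathcal{B}})$ for every $d\times(d-1)$ submatrix $\mathcal{B}$ of $B$, where $\boldsymbol{\delta}_{\mathcal{B}}$ are its signed maximal minors. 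Combined with the isomorphism $\mathcal{R}_R(I)\simeq \mathcal{R}_{k[\tt]/Q}(\langle\boldsymbol{\delta}_{\mathcal{B}},Q\rangle/Q)$ of \cite{Sim} (one for each $\mathcal{B}$ of rank $d-1$ mod $Q$, furnishing representatives of the inverse to $\psi$), this permits an induction on the total bidegree $p+q$ of a bihomogeneous element of $\mathcal{J}$: lifting from $\overline{\mathcal{J}}$ and applying the isomorphisms $\partial_{\mathcal{B}_i}$ successively peels off the coefficients $\mathfrak{c}_i(\xx,\tt)$ as elements of $\mathcal{J}$ of strictly smaller $p+q$, forcing them into $\langle I_1(\tt\cdot\phi),Q\rangle$ by induction. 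The ``not expected form'' part then follows because $t_u I_{d-1}(B')\subset I_d(B)\subset Q$ with $Q$ prime and $\mathrm{indeg}(Q)\geq d-1$, so $I_{d-1}(B')\subset Q$ and $Q$ has initial degree exactly $d-1<d$.

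\medskip

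In short: your normalization to rank $1$ is invalid, and the fiber-type mechanism the paper uses---specialization to one dimension lower plus the birational inverse-map isomorphisms---is entirely absent from your outline.
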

The main contribution of this result is to collect the previous results under one and the same umbrella.
Of course, the question remains as to when  the special fiber is wholly defined by a determinantal ideal.

Concerning item (i)  we actually prove, more strongly, the following generalization of \cite[Theorem 2.4 (a)]{DRS}:

\begin{Proposition}\label{birationality} With the same notation as above, let $I_{n-1}$ denote the linear span of $I$.  
	Then the rational map $\psi:\pp_k^{d-1}\dasharrow \pp_k^{n-1}$ defined by $I_{n-1}$ is birational onto the image.
\end{Proposition}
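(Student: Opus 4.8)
The plan is to present $\psi$ through the Jacobian dual $B$ and reduce birationality to one rank statement for $B$ along the image. Write $\gg=(g_1,\dots,g_n)$ for the (signed) maximal minors of $\phi$, so that $\psi=(g_1:\cdots:g_n)$ and the Hilbert--Burch complex gives $\gg\cdot\phi=0$. Substituting $\tt\mapsto\gg$ in the defining identity $\tt\cdot\phi=\xx\cdot B$ yields $\xx\cdot B(\gg)=0$ in $R$ (both sides are $1\times(n-1)$ over $R$, and the left side is $\gg\cdot\phi=0$). Evaluating this polynomial identity at a point $\bar x\in\pp_k^{d-1}\setminus\Var(I)$ and using linearity of $B$ in $\tt$ shows that $\bar x$ lies in the left null space of the scalar matrix $B(\bar y)$, where $\bar y:=\psi(\bar x)$; running the same computation over every point of the fibre $\psi^{-1}(\bar y)$ shows that the whole fibre is contained in the linear subvariety of $\pp_k^{d-1}$ cut out by $B(\bar y)$. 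Let $\bar B$ denote the image of $B$ in the matrix ring over the special fiber $\mathcal F_R(I)=k[\tt]/Q$, a domain.

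The reduction then runs as follows. If $\rank\bar B=d-1$, the general fibre of $\psi$ is a single reduced point: Cramer's rule applied to a full-rank $d\times(d-1)$ submatrix of $\bar B$ produces a rational map $\sigma\colon Y=\overline{\psi(\pp_k^{d-1})}\dasharrow\pp_k^{d-1}$, with coordinates the signed $(d-1)$-minors of that submatrix, and one checks that $\sigma\circ\psi=\mathrm{id}$, whence $\psi$ is birational onto $Y$ (this argument is characteristic-free, so no inseparability issue arises); in particular $\dim Y=d-1$, i.e. $\ell(I)=\dim\mathcal F_R(I)=d$, which is item (i) of Theorem \ref{main_thm}. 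One inequality, $\rank\bar B\le d-1$, is automatic: from $\xx\cdot B(\gg)=0$ the nonzero row $\xx$ lies in the left null space of $B(\gg)$ over $\operatorname{Frac}(R)$, so $\rank B(\gg)\le d-1$, hence every $d\times d$ minor of $B$ vanishes on $Y$, i.e. $I_d(B)\subseteq Q$. What remains to prove is the reverse inequality
\[
\rank\bar B\ge d-1,\qquad\text{equivalently}\qquad I_{d-1}(B)\not\subseteq Q,\qquad\text{equivalently}\qquad I_{d-1}\bigl(B(\gg)\bigr)\ne(0)\ \text{in}\ R,
\]
that is, the $n-1$ columns of $B(\gg)$ — which are syzygies of $(x_1,\dots,x_d)$ — generically span the full rank-$(d-1)$ module of such syzygies.

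This reverse inequality is the crux, and the only place where the hypotheses $G_{d-1}$, the failure of $G_d$, and $I_1(\phi)=\langle x_1,\dots,x_d\rangle$ are genuinely used. Here I would invoke the structural picture of $\phi$ in this regime: $G_{d-1}$ forces the non-$G_d$ locus — the set where $\phi$ has rank at most $n-d$ — to have codimension exactly $d-1$, and, $k$ being algebraically closed, after a linear change in the $x_i$ together with elementary row and column operations on $\phi$ one reduces to a normal form in which the rank drop is concentrated along a linear subspace $\Var(x_1,\dots,x_{d-1})$ — exactly the situation of \cite{CPW1}, \cite{L1}, \cite{L2}, \cite{DRS}. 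For such a normal form one exhibits a specific $(d-1)\times(d-1)$ submatrix $B'$ of $B$ and verifies that $\det B'$ does not vanish on $Y$: this is the thread common to the cited theorems, where $\det B'$ (or a neighbouring $(d-1)$-minor) turns out to be precisely the extra generator of $Q$, while the remaining $(d-1)$-minors of $B$ are not all multiples of it, so $\bar B$ cannot drop below rank $d-1$. Equivalently, one must rule out a nontrivial rational self-map $q$ of $\pp_k^{d-1}$ with $\psi\circ q=\psi$. This verification, together with the reduction to the normal form that precedes it, is where the main obstacle lies; everything else is the soft reduction described above.
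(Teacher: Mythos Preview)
Your framework matches the paper's: reduce birationality, via the criterion of \cite{DHS}, to the single inequality $\rank_{\mathcal{F}_R(I)}(B)\ge d-1$, i.e.\ to exhibiting one $(d-1)$-minor of $B$ not lying in $Q$. The gap is entirely in your third paragraph, where you try to produce that minor. Two problems. First, the reduction to ``exactly the situation of \cite{CPW1}, \cite{L1}, \cite{L2}, \cite{DRS}'' is invalid: each of those papers carries an extra hypothesis ($n=d+1$, or rank exactly $1$ modulo a fixed codimension $d-1$ linear space) that does \emph{not} follow from $G_{d-1}$ together with the failure of $G_d$; the whole point of the present paper is to remove those hypotheses. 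The normal form one can always reach (Lemma~\ref{formating_phi}) has $\phi$ of rank $u$ over $R/\langle x_2,\dots,x_d\rangle$ for some $1\le u\le n-d$, not necessarily $u=1$. Second, the argument you sketch for the minor is circular: you say $\det B'$ is ``precisely the extra generator of $Q$'' and at the same time ``does not vanish on $Y$''; if it lies in $Q$ it vanishes on $Y$. The descriptions of $Q$ in the cited papers are conclusions of those theorems, not inputs available at this stage of the argument.

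The paper fills the gap with a short argument whose key ingredient you never invoke. In the normal form the Jacobian dual has first row $(t_1,\dots,t_u,0,\dots,0)$ and a $(d-1)\times(n-1-u)$ block $B'$ sitting below the zeros. One first shows $\rank B'=d-1$ over $k[\tt]$ via a symmetric-algebra dimension count (Lemma~\ref{rankB'}, using Proposition~\ref{dimSym} and the Huneke--Rossi formula), so some $(d-2)$-minor $\det M'$ of $B'$ is nonzero in $k[\tt]$. Adjoining one of the first $u$ columns of $B$ produces a $(d-1)$-minor of $B$ that factors as $\det M=t_i\cdot\det M'$. Now the decisive step, absent from your sketch: by Tchernev's bound (Proposition~\ref{initial-degree}), condition $G_{d-1}$ forces ${\rm indeg}(Q)\ge d-1$. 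Since $Q$ is prime and the two factors have degrees $1$ and $d-2$, neither can lie in $Q$, hence $\det M\notin Q$ and $\rank_{\mathcal{F}_R(I)}(B)\ge d-1$.
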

Here, $\pp_k^{d-1}={\rm Proj}(R)$ denotes projective space over $k$ in homogeneous coordinates $x_1,\ldots,x_d$, while $\pp_k^{n-1}$ is similarly defined. The linear span $I_{n-1}$ is nothing else than the $k$-vector space spanned by the forms of degree $n-1$ in $I$. We emphasize that the rational map $\psi$ does not depend on the chosen vector basis.
Besides, it is closely related to the Rees algebra and the special fiber of $I.$ Namely, the Rees algebra $\mathcal{R}_R(I)$ corresponds to the  bihomogeneous coordinate ring of the graph of $\psi$, and  the special fiber $\mathcal{F}_R(I)$ corresponds to the homogeneous coordinate ring of the image of $\psi$
 (for details on the algebraic side of rational maps and their images, we refer to \cite{DHS}).

In regards to the Cohen--Macaulay property of the Rees algebra in item (ii), it may fail quite commonly even for $d=3$, as noticed in \cite[Remark 4.14]{DRS}.
Since the notion of an expected reduction number is related to this property, it is not clear what would be a replacement for such a notion over here. In any case, it will be clear that the reduction number is bounded below by $d-2$, and there are examples where this number is $d-1$.

Our essential procedure resorts to a specialization argument followed by a close scrutiny of a family of representatives of the inverse map to the above rational map $\psi$.
More exactly,  the specialization argument allows us to associate to the ideal $I$  an ideal $\overline{I}$  satisfying the hypotheses of \cite[Theorem 1.3]{MU}, while the chosen family of representatives of the inverse map to $\psi$ obtained  through the dual Jacobian  dual matrix $B$ as in  \cite[Theorem 2.18]{DHS} allows to control the generators of the defining ideal of the Rees algebra of $I$ through the generators of the defining ideal of the Rees algebra of $\overline{I}$.
A curiosity is that the birationality criterion of \cite[Theorem 2.18]{DHS} does not require that the ground field $k$ be algebraically closed, and yet the present proof of Proposition~\ref{birationality} involves this hypothesis.
As we have no strong reason to expect that the proposition fails otherwise, perhaps there is a missing argument that allows to {\em assume} that $k$ is algebraically closed throughout.

	\section{Notation and main tools}
	
	Let $R$ denote a Noetherian ring and let $E$ be a finitely generated $R$-module having rank.
	We recall the definitions of certain  graded algebras associated with the pair $(R,E)$. For further details, the reader is referred to \cite{GradedBook}.
	
	\begin{enumerate}
		\item[$\bullet$] The symmetric algebra  $\mathcal{S}_R(E)=\bigoplus_{i\geq 0}\mathcal{S}_{R,i}(E)$ -- the direct sum of the symmetric powers of $E.$
		\item[$\bullet$] The Rees algebra
		$\mathcal{R}_R(E)=\mathcal{S}_R(E)/\langle R{\rm -torsion}\rangle $.
		
		This notion of Rees algebra of a module is correct provided  $E$ has a rank, and  moreover in this case it retrieves the usual definition in the case of an ideal 
	as the direct sum of its powers.
		
	\item[$\bullet$]	If, moreover, $R$ is local with maximal ideal $\fm$ (or standard graded with maximal irrelevant ideal $\fm$), we consider the special fiber  (or fiber cone) $\mathcal{F}_{R}(E):= \mathcal{R}_R(E)/\mathfrak{m} \mathcal{R}_R(E).$
	\end{enumerate}

In order to present $\mathcal{S}_R(E)$ as a residue algebra of a polynomial ring, we pick a (finite) set of, say, $m$ generators of $E$ over $R$, and the corresponding finite free presentation of $E$ over $R$
\begin{equation}\label{presentation_of_E}
	R^n \stackrel{\phi}{\lar} R^m \lar E\rar 0.
\end{equation}
Then the universal property of the symmetric algebra implies the exact sequence of symmetric algebras
$$\mathcal{S}_R(R^n) \stackrel{\mathcal{S}_R\phi}{\lar} \mathcal{S}_R(R^m) \lar \mathcal{S}_R(E)\rar 0,$$
providing an isomorphism of $R$-modules $\mathcal{S}_R(E)\simeq R[\tt]/I_1(\tt\cdot \phi)$, where $R[\tt]$ is a polynomial ring over $R$ in $m$ variables $\tt$ which is graded isomorphic to $\mathcal{S}_R(R^m)$ as $R$-algebras.

A presentation of the Rees algebra is available drawing upon the above representation of $\mathcal{S}_R(E)$, namely, we 
have $\mathcal{R}_R(E)\simeq R[\tt]/\langle I_1(\tt\cdot \phi), \tau(E)\rangle$, where $\tau(E)$ stands for the $R$-torsion of $\mathcal{S}_R(E)$ lifted to $R[\tt]$.

In general the complementary generators afforded by $\tau(E)$ can be quite involved.
In the case where $R$ is a standard graded ring over a field $k$ and $I\subset R$ is an equigenerated ideal thereof, then the special fiber $\mathcal{F}_R(I)$ can be identified with the $k$-subalgebra $k[It]\subset R[It]$.
In this way, the homogeneous defining ideal $Q$ of the latter is contained in $\tau(I)$.

There is a couple of ways to define the condition $G_s$, originally introduced in \cite{AN} in the case of ideals. 
Thus, let $R$ denote a Noetherian ring and let $E$ be a finitely generated $R$-module having rank $e$.

\begin{Definition}\label{def_of_G_s}\rm 
	Given an integer $s\geq 1$,  $E$  satisfies  {\em condition $G_s$} if $\mu(E_{\wp})\leq \dim R_{\wp}+e-1$ for every
	${\wp}\in \Spec(R)$ with $\dim R_{\wp}\leq s-1$.
	
	Equivalently, letting 
	$$R^r\stackrel{\phi}{\lar} R^m \lar E \rar 0$$ 
	stand for a finite free presentation of $E$ over $R$, 
	$E$ satisfies  condition $G_s$ if 
	\begin{equation}\label{G_s_in_terms_of_minors}
		\Ht I_j(\phi)\geq m-j-(e-2), \; \text{\rm for} \; m-s-(e-2)\leq j\leq m-e,
	\end{equation}
	where $\Ht \mathcal{I}$ denotes the height of an ideal $\mathcal{I}$.
\end{Definition}

The definition as applied to an ideal with positive grade translates, respectively, into any of the requirements
\begin{equation}\label{G_s_for ideals}
	\mu(I_{\wp})\leq \Ht {\wp}, \; \text{\rm for} \;
	{\wp}\in \Spec(R) \; \text{\rm with} \; \Ht {\wp}\leq s-1.
\end{equation}
or
\begin{equation}\label{G_s_for ideals-bis}
	\Ht I_j(\phi)\geq m-j+1, \; \text{\rm for} \; m-s+1\leq j\leq m-1.
\end{equation}
Next we state two results involving the role of the condition $G_s$.
The first is the following  corollary of \cite[Theorem 5.1]{Tch}:

\begin{Proposition}\label{initial-degree}
	Let $R$ be a standard graded Cohen–Macaulay ring over a field and let
	$I\subset R$ denote an equigenerated perfect homogeneous ideal of codimension $2.$ 
	Set $Q$ as above for the homogeneous defining ideal of the special fiber $\mathcal{F}_R(I)$, and assume that $Q\neq \{0\}$. If $I$ satisfies condition $G_s$,  then ${\rm indeg}(Q) \geq s.$
\end{Proposition}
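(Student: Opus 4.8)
The plan is to invoke \cite[Theorem 5.1]{Tch} and translate its conclusion into the language of the special fiber. First I would recall the setup of that theorem: since $I$ is perfect of codimension $2$ with a linear presentation (more generally, an equigenerated presentation), the Hilbert--Burch matrix $\phi$ provides a free presentation, and one considers the associated ``linear part'' of the Rees/symmetric algebra encoded by the equations $I_1(\tt\cdot\phi)$. Tchernev's theorem gives a lower bound on the initial degree of the defining ideal of a certain algebra (the symmetric or Rees algebra, or its fiber) in terms of the heights of the Fitting ideals $I_j(\phi)$, i.e.\ precisely the quantities controlled by condition $G_s$ via \eqref{G_s_for ideals-bis}. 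So the first step is to state exactly which inequality on $\Ht I_j(\phi)$ the hypothesis $G_s$ supplies, namely $\Ht I_j(\phi)\geq m-j+1$ for $m-s+1\le j\le m-1$, where $m=\mu(I)$.

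Next I would pass from the Rees algebra to the special fiber. Recall from the discussion in Section~2 that $Q\subseteq\tau(I)$ and that $\mathcal{F}_R(I)=k[It]$ sits inside $\mathcal{R}_R(I)$; in particular, a homogeneous element of $Q$ of $\tt$-degree $r$ is a polynomial relation among the generators of $I$ of degree $r$ with \emph{constant} coefficients (no $x_i$'s). The point is that any such relation of low degree would force a corresponding low-degree syzygy-type identity that is incompatible with the height estimates coming from $G_s$. Concretely, I expect Tchernev's result (or a direct Artin--Rees/linkage argument) to show that the defining equations of $k[It]$ cannot appear in $\tt$-degree less than $s$ once the relevant Fitting ideals have height at least the bound above; equivalently, the first $s-1$ graded strata $\mathcal{S}_{R,r}(I)\to R_{r(n-1)}$ (for $r<s$) are ``as injective as possible'' on the fiber, i.e.\ $[\tau(I)/I_1(\tt\cdot\phi)\cap(\text{constant part})]_r=0$ for $r<s$. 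Since $Q\neq\{0\}$ by hypothesis, this yields $\mathrm{indeg}(Q)\ge s$.

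The main obstacle, and the step requiring the most care, is the precise matching of indices and which algebra Tchernev's theorem is phrased for: his Theorem 5.1 is presumably stated for the approximation complex / symmetric algebra or for the Rees algebra, not verbatim for the special fiber, so I would need to argue that the bound on $\mathrm{indeg}$ descends to $Q$. This should follow because the extra generators of $\tau(I)$ beyond $I_1(\tt\cdot\phi)$ that have \emph{positive} $x$-degree cannot contribute to $Q$ (which lives in the constant-coefficient subring $k[\tt]$), so the fiber equations are a subset of the ``pure $\tt$'' part of $\tau(I)$, whose initial degree is bounded below by the same quantity. A secondary technical point is ensuring the Cohen--Macaulay and perfect-codimension-$2$ hypotheses are exactly those under which \cite[Theorem 5.1]{Tch} applies; granting that, the corollary is immediate. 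I would close by remarking that the hypothesis $Q\neq\{0\}$ is needed only to make ``$\mathrm{indeg}$'' well defined, and that in the situations of Theorem~\ref{main_thm} this nonvanishing holds automatically since $I$ fails $G_d$.
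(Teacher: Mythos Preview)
Your approach is the same as the paper's: the paper does not give a proof at all, but simply states the proposition as a corollary of \cite[Theorem 5.1]{Tch}. Your proposal goes further by sketching why the citation suffices, and the essential idea is correct: Tchernev's result yields torsion-freeness of the symmetric powers $S_r(I)$ for $r\le s-1$ under $G_s$, so $\mathcal{J}_{(*,r)}=I_1(\tt\cdot\phi)_{(*,r)}$ in those degrees; since the generators of $I_1(\tt\cdot\phi)$ have positive $\xx$-degree, the bidegree-$(0,r)$ piece of $\mathcal{J}$, which is precisely $Q_r$, vanishes for $r<s$.

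A couple of points in your write-up would need tightening. First, the proposition assumes only that $I$ is equigenerated, not linearly presented, so you should not lean on linearity of $\phi$; what matters is only that the entries of $\phi$ have positive $\xx$-degree, which follows from homogeneity and minimality of the presentation. Second, your passage ``$Q\subseteq\tau(I)$'' and the surrounding discussion is a bit loose: the clean statement is $Q_r=\mathcal{J}_{(0,r)}$, and torsion-freeness of $S_r(I)$ gives $\mathcal{J}_{(*,r)}=I_1(\tt\cdot\phi)_{(*,r)}$, whence $Q_r=0$. With those clarifications your sketch becomes a complete argument filling in what the paper leaves implicit.
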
 

The condition also implies a lower bound for the reduction number.
Namely, in particular, if $\mu(I)> \dim R$ and $I$ satisfies condition $G_s$, then $ \mu(JI^j) < \mu (I^{j+1})$ for all $0 \leq j\leq s-2$, 
where $J$ denotes a minimal reduction of $I$.
That is, the reduction number of $I$ is at least $s-1$. 

The second result has to do with the dimension of the symmetric algebra $\mathcal{S}_R(E)$, of which much has been said way back in regard to the behavior of the ideals of minors of the presentation matrix $\phi$ of $E$ (see \cite{SV1}, \cite{SV2}).
Here the emphasis is on modules of projective dimension one and on the $G_s$ condition.
As such, the following statement does not seem to have been recorded before in this precision.
It will be quite useful in the subsequent sections.
The idea of the proof itself has been employed before in the case of ideals (see, e.g., \cite[Theorem 4.1]{L1}).

\begin{Proposition}\label{dimSym}
	Let $R$ be a standard graded Cohen-Macaulay  ring of dimension $d$ and let $E$ be an $R$-module with rank $e$.
	Assume the following conditions$:$
	\begin{enumerate}
		\item[{\rm (i)}] $E$ has projective dimension one, with minimal free resolution
		$$0\to R^{n-e}\stackrel{\phi}\lar R^n\to E\to 0,$$
		such that $n\geq  d+e-1$.
		\item[{\rm (ii)}]  $E$ satisfies condition $G_s,$ for some $1\leq s\leq d-1.$
		\item[{\rm (iii)}] With $s$ as in item {\rm (ii)}, there is a prime ideal $I_1(\phi)\subset {\bf p} \subset R$ with $ \Ht {\bf p} \geq s $, such that  $\Ht I_{j}(\phi)\geq \Ht {\bf p}-j+1$ for every  $1\leq j\leq \Ht {\bf p}-s$. 
	\end{enumerate}
	Then  $\dim \mathcal{S}_R(E)=n+\dim R/{\bf p}.$ 
\end{Proposition}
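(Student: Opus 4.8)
The plan is to compute $\dim \mathcal{S}_R(E)$ via the standard localization/fibre dimension formula for symmetric algebras: since $\mathcal{S}_R(E) \simeq R[\tt]/I_1(\tt\cdot\phi)$ is a finitely generated $R$-algebra, one has
\[
\dim \mathcal{S}_R(E) = \max_{\wp \in \Spec R} \bigl( \dim R/\wp + \dim \mathcal{S}_{R_\wp}(E_\wp) \otimes k(\wp) \bigr),
\]
and over the localization $R_\wp$ the fibre $\mathcal{S}_{R_\wp}(E_\wp)\otimes k(\wp)$ is a polynomial ring whose dimension is controlled by $\mu(E_\wp)$ together with the ranks of the localized presentation matrix. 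Concretely, the classical bound (going back to \cite{SV1},\cite{SV2}, and used in the ideal case in \cite[Theorem 4.1]{L1}) gives, for each $\wp$,
\[
\dim R/\wp + \mu(E_\wp) \;\text{when } E_\wp \text{ is not free, resp. } \dim R/\wp + e \;\text{when free,}
\]
as the contribution, but sharpened by the minors hypothesis one gets the cleaner statement $\dim \mathcal{S}_R(E) = \sup_\wp\{\dim R/\wp + \mu(E_\wp)\}$ subject to $E$ non-free at $\wp$, together with the generic contribution $d+e-1$ coming from $n - (n-e) = e$ free rank plus... — I will need to be careful here and instead argue the two inequalities separately.

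First I would prove $\dim \mathcal{S}_R(E) \geq n + \dim R/{\bf p}$. Localize at ${\bf p}$: since $I_1(\phi)\subset {\bf p}$, the matrix $\phi$ becomes zero modulo ${\bf p}$, so $E_{\bf p}$ requires all $n$ generators minimally, i.e. $\mu(E_{\bf p}) = n$; hence the fibre $\mathcal{S}_{R_{\bf p}}(E_{\bf p})\otimes k({\bf p})$ is a polynomial ring in $n$ variables over $k({\bf p})$, of dimension $n$. Combined with $\dim R/{\bf p}$ via the formula above (or, concretely, by exhibiting a chain of primes in $\mathcal{S}_R(E)$ lying over a maximal chain in $R/{\bf p}$ and extending it inside the fibre), this yields the lower bound $n + \dim R/{\bf p}$.

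Next, the upper bound $\dim \mathcal{S}_R(E) \leq n + \dim R/{\bf p}$: here is where conditions (ii) and (iii) do the work. For an arbitrary prime $\wp \in \Spec R$, let $r = \operatorname{rank}\phi_\wp$, so $\mu(E_\wp) = n - r$, and the contribution of $\wp$ is at most $\dim R/\wp + (n - r)$. If $E_\wp$ is free (i.e. $r = n-e$), the contribution is $\dim R/\wp + e \leq d + e - 1 \leq n$ by hypothesis (i). Otherwise $r < n - e$, equivalently $I_{r+1}(\phi) \subset \wp$; set $j = n - r - e \geq 1$ (so $j$ is the defect of freeness) — one checks $I_{r+1}(\phi) = I_{n-j-e+1}(\phi)$, and the vanishing of $I_{r+1}(\phi)$ at $\wp$ gives $\Ht \wp \geq \Ht I_{r+1}(\phi)$. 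The point is then to feed $\Ht I_{r+1}(\phi)$ through the bounds: hypotheses (ii) ($G_s$, i.e. \eqref{G_s_in_terms_of_minors}) for the small minors, and (iii) for the range between $s$ and $\Ht {\bf p}$, together forcing $\Ht I_{r+1}(\phi)$ to be large enough that
\[
\dim R/\wp + \mu(E_\wp) = (d - \Ht\wp) + (n - r) \leq (d - \Ht I_{r+1}(\phi)) + (r + n - r) \leq n + \dim R/{\bf p}.
\]
I expect the bookkeeping in this last step — matching the index $j = n-r-e$ against the two ranges ``$m-s-(e-2)\le j\le m-e$'' from $G_s$ and ``$1\le j\le \Ht{\bf p}-s$'' from (iii), with $m=n$, and checking the arithmetic collapses correctly to $n + (d - \Ht{\bf p})$ — to be the main obstacle; everything else is the standard symmetric-algebra dimension machinery. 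The borderline case where $\wp$ itself can be taken to contain $I_1(\phi)$ with $\Ht\wp$ between $s$ and $\Ht{\bf p}$ should be handled by (iii) directly, and one should note that the hypothesis $\Ht{\bf p}\ge s$ guarantees the range in (iii) is consistent with $G_s$ at the overlap $j = \Ht{\bf p}-s$.
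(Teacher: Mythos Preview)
Your approach is essentially the paper's: both compute $\dim\mathcal{S}_R(E)$ via the Huneke--Rossi formula $\dim\mathcal{S}_R(E)=\sup_{\wp}\{\dim R/\wp+\mu(E_\wp)\}$, obtain the lower bound at $\wp={\bf p}$ (where $I_1(\phi)\subset{\bf p}$ forces $\mu(E_{\bf p})=n$), and bound the remaining primes using $G_s$ and hypothesis~(iii). The one organisational difference is that the paper stratifies by $\Ht\wp$ into four ranges ($\Ht\wp\le s-1$; $s\le\Ht\wp<\Ht{\bf p}$; $\wp={\bf p}$; $\Ht\wp\ge\Ht{\bf p}$) rather than by $r=\operatorname{rank}\phi_\wp$. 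This is cleaner, because the hypotheses are stated as height bounds on $I_j(\phi)$ and translate directly: in the second range one sets $j:=\Ht{\bf p}-\Ht\wp$ (so $1\le j\le\Ht{\bf p}-s$), uses~(iii) to get $\Ht I_j(\phi)\ge\Ht\wp+1$, hence $I_j(\phi)\not\subset\wp$ and $\mu(E_\wp)\le n-j$, which collapses immediately to $\dim R/\wp+\mu(E_\wp)\le n+\dim R/{\bf p}$ with no index-matching against~\eqref{G_s_in_terms_of_minors} required. Your route via $r$ and $j=n-r-e$ forces exactly the translation you flagged as ``the main obstacle''; switching to the height stratification removes it entirely. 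One small slip: in your free case you wrote $\dim R/\wp+e\le d+e-1$, but equality $\dim R/\wp=d$ can occur at a minimal prime; the paper instead folds this into the $\Ht\wp\le s-1$ case and invokes $G_s$ directly.
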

\begin{proof}
	We apply the Huneke--Rossi formula (\cite{HuRo})
	$$\dim {\mathcal S}_R(E)= {\rm sup}_{\wp\in {\rm Spec} R} \left\{\dim R/\wp+ \mu(E_\wp)\right\},$$
	by sweeping through the possible values of $\Ht \wp$.
	
	\medskip
	
	$\bullet$   $\Ht \wp\leq s-1$
	\begin{eqnarray}
		\dim R/\wp+\mu(I_{\wp}) &=& d-\Ht \wp+\mu(E_{\wp}) \nonumber\\
		&\leq & d +e-1\quad\quad\quad\quad\quad\quad\quad (\mbox{because $E$ satisfies $G_s$})\nonumber\\
		&\leq &n \quad\quad\quad\quad\quad\quad\quad\quad\quad\quad\,\,(\mbox{by hypothesis}).
	\end{eqnarray}
	
	$\bullet$ $s\leq \Ht \wp <\Ht{\bf p}.$ 
	
	Set $j:=\Ht {\bf p}-\Ht \wp.$ In particular, $1\leq j\leq \Ht{\bf p}-s.$ So, by hypothesis we have $\Ht I_j(\phi)\geq \Ht {\bf p}-j+1=\Ht\wp+1.$ From this, $I_j(\phi)\not\subset \wp.$ Thus, $\mu(E_{\wp})\leq n-j=n-\Ht {\bf p}+\Ht \wp.$ 
	Therefore,
	\begin{equation}
		\dim R/\wp+\mu(E_{\bf p})\leq d-\Ht \wp+n-\Ht {\bf p}+\Ht \wp=n+d-\Ht{\bf p} =n+\dim R/{\bf p}
	\end{equation}
	
	$\bullet$ $\wp={\bf p}$
	
	Since $I_1(\phi)\subset{\bf p}$, then
	$$0\to R_{{\bf p}}^{n-e}\stackrel{\phi}\lar R_{{\bf p}}^n\to E_{{\bf p}}\to 0$$
	is a minimal free resolution of $E_{{\bf p}}$ over $R_{{\bf p}}.$ Therefore, $\mu(E_{{\bf p}})=n$ and
	$$\dim R/{\bf p}+\mu(E_{{\bf p}})=n+\dim R/{\bf p}.$$

	$\bullet$ $\Ht\wp\geq \Ht{\bf p}$
	
	In this case we have $\dim R/\wp\leq \dim R/{\bf p}$ and $\mu(E_{\wp})\leq n.$ Thus,
	$$\dim R/\wp+\mu(E_{\wp})\leq n+\dim R/{\bf p}$$
	
	Having swiped through all possible values  of $\Ht \wp$, we have $\dim \mathcal{S}_R(E)=n+\dim R/{\bf p}$ as claimed.
\end{proof}

\section{Preliminaries}\label{Prelims}
	
	We keep the assumptions and notation of the previous section,
	where $R=k[x_1,\ldots,x_d]$ is a standard graded polynomial ring and $I\subset R$ is a perfect ideal of codimension $2$ with an $n\times (n-1)$ linear presentation matrix $\phi$.
	In addition, we stipulate that throughout $I_1(\phi)=\langle x_1,\ldots,x_d \rangle$ and the ideal $I$ satisfies condition $G_{d-1}$, but not condition $G_d$.
	
	This section contains a lemmata on the main ingredients  used in the proofs of the two main results of the paper.
	
	The first lemma comprises elements of an extended notion of the chaos number as introduced in \cite{DRS} to arbitrary dimension $d\geq 3$.

	\begin{Lemma}\label{petit_everything}
		With these assumptions, 
		one has$:$
		\begin{enumerate} 
			\item[\rm (i)] $\Ht I_{n-d+1}(\varphi)=d-1$ {\rm (See also \cite[Lemma 2.4]{CPW1})}.
			\item[\rm (ii)]  There is a least integer $1\leq u\leq n-d$ such that $\Ht I_{u+1}(\varphi)=d-1$.
			Thus, $\Ht I_i(\phi)=d$ for every $i\leq u$.
			\item[\rm (iii)]  For any minimal prime $\wp$ of $R/I_{u+1}(\phi)$, the rank of $\phi$ over $R/\wp$ is $u$.
			\item[\rm (iv)]   $\dim \mathcal{S}_R(I)=n$ {\rm (Covers \cite[Theorem 4.1]{L1})}.
			\item[\rm (v)]  The Jacobian dual matrix $B$ of $\phi$ has rank $d$ over $k[\tt]$.
		\end{enumerate}
	\end{Lemma}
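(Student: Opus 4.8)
The plan is to dispatch the five items in order: (i)--(iii) by elementary bookkeeping with the heights of the ideals of minors of $\phi$, (iv) as an application of Proposition~\ref{dimSym}, and (v) as a short consequence of (iv). For (i) and (ii) I would begin from the fact that along the chain $I_1(\phi)\supseteq I_2(\phi)\supseteq\cdots\supseteq I_{n-1}(\phi)=I$ the heights are non-increasing. Since $I$ satisfies $G_{d-1}$, \eqref{G_s_for ideals-bis} gives $\Ht I_{n-d+2}(\phi)\geq d-1$, whence $\Ht I_{n-d+1}(\phi)\geq d-1$ by monotonicity; on the other hand, ``$G_{d-1}$ holds but $G_d$ fails'' is precisely the assertion that \eqref{G_s_for ideals-bis} for $s=d$ fails at the single index $j=n-d+1$, i.e. $\Ht I_{n-d+1}(\phi)<d$. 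Combining the two yields (i). For (ii), since $I_1(\phi)=\langle x_1,\dots,x_d\rangle$ has height $d$, monotonicity shows $\{\,j:\Ht I_j(\phi)\leq d-1\,\}$ is an up-segment of $\{1,\dots,n-1\}$ that omits $1$ and contains $n-d+1$; letting $u+1$ be its least element gives $1\leq u\leq n-d$ with $\Ht I_i(\phi)=d$ for $i\leq u$, and $\Ht I_{u+1}(\phi)=d-1$ (it is $\leq d-1$ by choice of $u+1$, and $\geq\Ht I_{n-d+1}(\phi)=d-1$ by monotonicity since $u+1\leq n-d+1$).

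For (iii), the rank of $\phi$ over $R/\wp$ equals $\max\{\,r:I_r(\phi)\not\subseteq\wp\,\}$, so it is at most $u$ because $I_{u+1}(\phi)\subseteq\wp$. If it were smaller we would have $I_u(\phi)\subseteq\wp$, hence $\Ht\wp\geq\Ht I_u(\phi)=d$ by (ii); a homogeneous prime of $R$ of height $d$ must equal $\langle x_1,\dots,x_d\rangle$, so $\wp=\langle x_1,\dots,x_d\rangle$ would be a minimal prime of $I_{u+1}(\phi)$. But $\Ht I_{u+1}(\phi)=d-1$ produces a homogeneous prime of height $d-1$ containing $I_{u+1}(\phi)$ and properly contained in $\langle x_1,\dots,x_d\rangle$, contradicting minimality. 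Hence the rank is exactly $u$.

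For (iv) I would invoke Proposition~\ref{dimSym} with $E=I$: the ideal has rank $e=1$ and, by Hilbert--Burch, projective dimension one with minimal free resolution $0\to R^{n-1}\stackrel{\phi}{\longrightarrow}R^n\to I\to 0$ (minimal since $\phi$ is linear), with $n\geq d+1>d=d+e-1$; hypothesis (ii) of that proposition holds with $s=d-1$ because $I$ satisfies $G_{d-1}$; and hypothesis (iii) holds with $\mathbf p:=I_1(\phi)=\langle x_1,\dots,x_d\rangle$, prime of height $d\geq s$, for which $\Ht I_j(\phi)\geq\Ht\mathbf p-j+1$ need only be checked at $j=1$. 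Thus $\dim\mathcal S_R(I)=n+\dim R/\mathbf p=n$. For (v), the presentation gives $\mathcal S_R(I)\cong R[\tt]/I_1(\tt\cdot\phi)=k[\xx,\tt]/I_1(\xx\cdot B)$ via $\tt\cdot\phi=\xx\cdot B$; viewed over $k[\tt]$, the relations $I_1(\xx\cdot B)$ generate the submodule of $k[\tt]^d$ spanned by the columns of $B$, so $\mathcal S_R(I)\cong\mathcal S_{k[\tt]}(\coker B)$ with $\coker B=k[\tt]^d/\Image(B\colon k[\tt]^{n-1}\to k[\tt]^d)$, a module of rank $d-\rank B$ over the domain $k[\tt]$. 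Evaluating the Huneke--Rossi formula \cite{HuRo} at the zero ideal of $k[\tt]$ gives $\dim\mathcal S_{k[\tt]}(\coker B)\geq n+(d-\rank B)$, which together with $\dim\mathcal S_R(I)=n$ forces $\rank B\geq d$, hence $\rank B=d$ since $B$ is $d\times(n-1)$.

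The substantive input is Proposition~\ref{dimSym}, which does the real work behind (iv); the step from (iv) to (v) then hinges only on the identification of $\mathcal S_R(I)$ as a symmetric algebra over $k[\tt]$, which is exactly where the standing hypotheses that $\phi$ is linear and $I_1(\phi)=\langle x_1,\dots,x_d\rangle$ enter. The rest is formal bookkeeping with the monotonicity of $\Ht I_j(\phi)$; the one place calling for care is the exclusion of $\langle x_1,\dots,x_d\rangle$ as a minimal prime of $I_{u+1}(\phi)$ in (iii), which relies on the strict inequality $\Ht I_{u+1}(\phi)=d-1<d$.
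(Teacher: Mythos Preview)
Your proof is correct and follows essentially the same route as the paper: items (i)--(iii) via the height bookkeeping on the chain $I_1(\phi)\supseteq\cdots\supseteq I_{n-1}(\phi)$, item (iv) by direct appeal to Proposition~\ref{dimSym} with $E=I$, $s=d-1$, and $\mathbf p=\fm=I_1(\phi)$, and item (v) by identifying $\mathcal S_R(I)\simeq\mathcal S_{k[\tt]}(\coker B)$ and reading off $\rk B=d$ from the Huneke--Rossi inequality at the zero prime. Your argument for (iii) is in fact a bit more explicit than the paper's in ruling out $\fm$ as a minimal prime of $I_{u+1}(\phi)$, but the underlying idea is identical.
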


\begin{proof}
	(i)  Since $I$ satisfies $G_{d-1}$ then, by (\ref{G_s_for ideals-bis}), one has $\Ht I_{n-d+2}(\phi)\geq n-(n-d+2)+1=d-1$.
	Since $I_{n-d+2}(\phi)\subset I_{n-d+1}(\phi)$, then $\Ht I_{n-d+1}(\phi)\geq d-1$ as well.
	On the other hand, if $G_d$ fails at some $n-d+2\leq j\leq n-1$ then $G_{d-1}$ also fails at this value.
	Therefore, $G_d$ must fail at the value $j=n-d+1$, that is,
	$\Ht I_{n-d+1}(\phi)< n-(n-d+1)+1=d$.
	
	(ii) This is an immediate consequence of the first item. 
	
	(iii) Let $r$ denote the rank of $\phi$ over $R/\wp$, that is, $r=\min\{t\,|\,I_t(\varphi)\not\subset \wp\}.$
	Since $I_{u+1}(\varphi)\subset\wp$ and $I_{u}(\varphi)\not\subset \wp$ (because $\Ht I_{u}(\varphi)=d$) the  statement follows.

	(iv) We apply Proposition~\ref{dimSym} with $E=I$,  $s=d-1$, and ${\bf p}=\fm:=\langle x_1,\ldots, x_d\rangle$. The assumptions of that proposition are easily verified --  assumption (iii) follows due to having $\fm=I_1(\phi)$.
	
	
(v)	In order to show that the rank of $B$ is $d$, let $M$ be the cokernel of $B.$ Thus, we have the finite presentation
	$$k[\tt]^{n-1}\stackrel{B}\lar k[\tt]^d\to M\to 0.$$
We have $\mathcal{S}_R(I)\simeq \mathcal{S}_{k[\tt]}(M)$ as follows from the equality $\tt\phi=\xx B$. Thus, $\dim \mathcal{S}_{k[\tt]}(M)=n.$ Hence, once more by the Huneke-Rossi formula, localizing at the zero ideal:
	$$n+\mu(M_{(0)})\leq n.$$
	Therefore, ${\rm rk}\, M= \mu(M_{(0)})=0.$ Consequently, ${\rm rk}\, B=d-{\rm rk}\, M=d.$
\end{proof}

The next lemma follows the principle of \cite[The proof of Theorem 2.4]{DRS}, by setting $\phi$ in a more convenient form.

\begin{Lemma}\label{formating_phi}
	{\rm ($k$ algebraically closed)}
	The  matrix $\varphi$ is conjugate to a matrix of the following form
	\begin{equation}\label{canonical}
		\left[
		\begin{array}{ccc|cccccc}
			x_1+a_1 &  &  &  & && & &\\
			& \ddots &   &  & & & & &\\
			&  & x_1+a_{u} &  & & & & &\\
			\rule{1cm}{0,1pt} & 	\rule{1cm}{0,1pt} & 	\rule{1cm}{0,1pt} &  & & & & &\\ 
			&  &  &  & & & & &\\
			&  &  &  & & & & &
		\end{array}
		\right],
	\end{equation}
	where  the $a_i$'s and the blank entries are linear forms in $k[x_2,\dots,x_d]$. 
\end{Lemma}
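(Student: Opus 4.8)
The plan is to pin down a line in $\Spec R$ along which $\varphi$ drops to rank exactly $u$, rotate that line onto the $x_1$-axis by a linear change of variables, and then normalize the resulting ``leading matrix'' by constant elementary operations. Here ``conjugate'' is understood as: after a linear change of the variables $x_1,\dots,x_d$ together with a replacement of $\varphi$ by $P\varphi Q$ with $P\in GL_n(k)$ and $Q\in GL_{n-1}(k)$. Such operations leave $I_1(\varphi)$ and the ideal of maximal minors unchanged (up to the coordinate change), preserve all the running hypotheses on $I$, and keep the entries of $\varphi$ linear; hence no generality is lost.

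First I would use Lemma~\ref{petit_everything}(ii) to fix the integer $u$ with $\Ht I_{u+1}(\varphi)=d-1$, and choose a minimal prime $\wp$ of $I_{u+1}(\varphi)$ with $\Ht\wp=d-1$. Since $R$ is Cohen--Macaulay one has $\dim R/\wp=1$, so $V_+(\wp)\subseteq\pp_k^{d-1}$ is an irreducible projective variety of dimension $0$; as $k$ is algebraically closed this is a single $k$-rational point $p$. A homogeneous prime ideal different from $\fm$ is radical and $\fm$-saturated, so by the projective Nullstellensatz $\wp$ coincides with the homogeneous ideal of the point $p$, i.e.\ is generated by $d-1$ linearly independent linear forms. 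Completing these to a $k$-basis of $R_1$ and performing the corresponding change of variables, I may assume $\wp=\langle x_2,\dots,x_d\rangle$.

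Next I would invoke Lemma~\ref{petit_everything}(iii), by which $\varphi$ has rank $u$ over $R/\wp$. Therefore $\varphi$ reduced modulo $\langle x_2,\dots,x_d\rangle$, a matrix of linear forms in the single variable $x_1$, equals $x_1A$ for a scalar matrix $A\in k^{n\times(n-1)}$ of rank exactly $u$. Choosing $P\in GL_n(k)$ and $Q\in GL_{n-1}(k)$ with $PAQ=\left(\begin{smallmatrix}I_u&0\\0&0\end{smallmatrix}\right)$ and replacing $\varphi$ by $P\varphi Q$, the reduction of $\varphi$ modulo $\langle x_2,\dots,x_d\rangle$ becomes $x_1\left(\begin{smallmatrix}I_u&0\\0&0\end{smallmatrix}\right)$. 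Reading this off entry by entry: the $(i,i)$ entry of $\varphi$ for $1\le i\le u$ has the form $x_1+a_i$ with $a_i\in k[x_2,\dots,x_d]$, and every remaining entry of $\varphi$ lies in $\langle x_2,\dots,x_d\rangle$, i.e.\ is a linear form in $k[x_2,\dots,x_d]$. This is precisely the shape displayed in (\ref{canonical}).

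The only genuinely non-formal step, and the one where the hypothesis $k=\overline{k}$ is really used, is the identification of a height-$(d-1)$ minimal prime of $I_{u+1}(\varphi)$ with a coordinate-type linear subspace: over a field that is not algebraically closed that prime might instead be the ideal of a Galois orbit of several points, which need not be generated by linear forms. Everything else is bookkeeping with operations that manifestly preserve the linearity of the entries of $\varphi$.
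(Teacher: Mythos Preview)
Your proof is correct and follows essentially the same route as the paper's: choose a height-$(d-1)$ minimal prime of $I_{u+1}(\varphi)$, use algebraic closedness to rotate it to $\langle x_2,\dots,x_d\rangle$, then use that $\varphi$ has rank $u$ modulo this prime to normalize the $x_1$-coefficient matrix to $\left(\begin{smallmatrix}I_u&0\\0&0\end{smallmatrix}\right)$ by invertible scalar matrices on both sides. Your exposition is in fact a bit more careful than the paper's in spelling out why the prime is generated by linear forms and in isolating exactly where $k=\overline{k}$ is used.
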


\begin{proof}  Let  $\wp$ be a minimal prime of $R/I_{u+1}(\phi)$.
	Since $k$ is algebraically closed and $\Ht I_{u+1}(\phi)=d-1$, by a change of variables we may assume that $\wp=\langle x_2,\ldots, x_d\rangle$.
	
	Write
	$\varphi=x_1\varphi_1+\cdots+x_d\varphi_d,$
	where $\varphi_1,\ldots,\varphi_d$ are $n\times (n-1)$ matrices over $k.$ Note that $x_1\varphi_1$ is same as $\phi$ taken over $R/\wp.$ Thus, $u=\rank\varphi_1$ by Lemma~\ref{petit_everything} (3). Hence, $\phi_1$ is conjugate to  $$ \left[\begin{array}{cccc}\mathbb{I}_u&\boldsymbol{0}\\\boldsymbol{0}&\boldsymbol{0}\end{array}\right],$$
	by means of matrices $A\in {\rm GL}_{n}(k)$ on the left, and $B\in {\rm GL}_{n-1}(k)$ on the right. 
	Then $\phi$ is conjugate by these same matrices to one of the desired form.
\end{proof}
We assume throughout the rest of the paper that $\phi$ is as in \eqref{canonical}.

Then the corresponding Jacobian dual matrix $B$ acquires the following form:
\begin{equation}\label{B}
	B=\left[
	\begin{array}{cccc}
		t_{1} & \cdots & t_{u} & {\bf 0}\\
		&  &  & B'
	\end{array}
	\right],
\end{equation}
where $B'$ is a $(d-1)\times(n-1-u)$ matrix, and  the blank entries are linear forms in $k[\tt]$.

Let $\phi_u$ denote the submatrix of $\phi$  omitting the first $u$ columns, giving rise to a short exact sequence of $R$-modules
\begin{equation}\label{module_E}
	R^{n-1-u}\stackrel{\varphi_u}{\longrightarrow}R^{n}\to E\to 0.
\end{equation}

The next lemma concerns the main properties of this $R$-module.
We note that the first two items are also a consequence of \cite[Lemma 2.6]{CPW2}, but we give proofs for completeness.

\begin{Lemma}\label{rankB'} Let $E$ be as in {\rm (\ref{module_E})}. Then$:$
	\begin{enumerate}
		\item[\rm(i)]  $E$ has rank $u+1$ {\rm (}i.e., $E$ has projective dimension one{\rm )}.
		\item[\rm(ii)]  $E$ satisfies the $G_{d-1}$ condition.
		\item[\rm (iii)] $\dim {\mathcal S}_R(E)=n+1.$
	\end{enumerate}
\end{Lemma}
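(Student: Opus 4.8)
The plan is to prove the three items essentially in the order listed, extracting them from the structure of the presentation \eqref{module_E} together with the facts already established about $\phi$ in Lemma~\ref{petit_everything} and Lemma~\ref{formating_phi}, and then feeding everything into Proposition~\ref{dimSym}. First, for item (i), I would argue that $\phi_u$ (the last $n-1-u$ columns of $\phi$) still has the maximal possible rank, namely $n-1-u$, so that $E$ has rank $n-(n-1-u)=u+1$; since \eqref{module_E} is then a length-one free resolution of $E$ (the map $\varphi_u$ being injective because it has maximal rank over the domain $R$), $E$ has projective dimension one. The point to check is that deleting the first $u$ columns of $\phi$, which in the form \eqref{canonical} carry the diagonal block $x_1+a_1,\dots,x_1+a_u$, does not drop the generic rank: this follows because $\phi$ itself has rank $n-1$ generically and the relevant maximal minors of $\phi_u$ are, up to the nonzero scalar coming from that diagonal block specialized away from $\wp$, among the maximal minors of $\phi$.

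For item (ii), I would verify that $E$ satisfies $G_{d-1}$ by comparing the ideals of minors of $\varphi_u$ with those of $\phi$. Concretely, with $e=u+1$ and $m=n$, condition $G_{d-1}$ for $E$ (via \eqref{G_s_in_terms_of_minors}) asks $\Ht I_j(\varphi_u)\ge n-j-(u-1)$ for $n-(d-1)-(u-1)\le j\le n-(u+1)$. Each such $I_j(\varphi_u)$ contains (is contained in the radical sense comparable to) a shifted ideal of minors of $\phi$: expanding a $(j+u)\times(j+u)$ minor of $\phi$ along the diagonal block $x_1+a_1,\dots,x_1+a_u$ shows $I_{j+u}(\phi)$ is, modulo the locus where that block degenerates, comparable to $I_j(\varphi_u)$, and $\Ht I_{j+u}(\phi)\ge n-(j+u)+1$ holds because $I$ satisfies $G_{d-1}$; rewriting $n-(j+u)+1 = n-j-(u-1)$ gives exactly the bound needed. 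I would state this reduction carefully and then just invoke the $G_{d-1}$ hypothesis on $I$. (The remark in the excerpt that this also follows from \cite[Lemma 2.6]{CPW2} is a useful safety net.)

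For item (iii), I would apply Proposition~\ref{dimSym} directly to $E$ with $R=k[x_1,\dots,x_d]$ of dimension $d$, rank $e=u+1$, $s=d-1$, and ${\bf p}=\fm=\langle x_1,\dots,x_d\rangle$. Hypothesis (i) of that proposition needs $n\ge d+e-1 = d+u$; since $u\le n-d$ this is exactly $u\le n-d$, which holds by Lemma~\ref{petit_everything}(ii). Hypothesis (ii) is item (ii) just proved. Hypothesis (iii) needs a prime containing $I_1(\varphi_u)$ of height $\ge d-1$ with $\Ht I_j(\varphi_u)\ge \Ht{\bf p}-j+1 = d-j+1$ for $1\le j\le d-1-s = 0$ — an empty range — so it suffices that $I_1(\varphi_u)\subseteq\fm$, which is automatic since all entries of $\phi$ lie in $\fm$. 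Proposition~\ref{dimSym} then yields $\dim\mathcal{S}_R(E)=n+\dim R/\fm = n+0$... which would give $n$, not $n+1$, so I must instead take ${\bf p}=\wp=\langle x_2,\dots,x_d\rangle$, the height-$(d-1)$ prime from Lemma~\ref{formating_phi}: here $\dim R/{\bf p}=1$, $\Ht{\bf p}=d-1=s$, the range $1\le j\le \Ht{\bf p}-s=0$ is again empty, and one checks $I_1(\varphi_u)\subseteq\wp$ because in the form \eqref{canonical} the submatrix $\varphi_u$ has entries that are linear forms in $k[x_2,\dots,x_d]$ (the $x_1$-part of $\phi$ lives only in the first $u$ columns). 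Hence $\dim\mathcal{S}_R(E)=n+1$.

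The main obstacle I anticipate is the bookkeeping in item (ii): making the comparison between $I_j(\varphi_u)$ and the shifted $I_{j+u}(\phi)$ fully rigorous, since deleting columns and ``factoring off'' the diagonal block $\mathrm{diag}(x_1+a_1,\dots,x_1+a_u)$ is only legitimate away from $V(I_1(\text{that block}))$, and one has to be sure the height estimate is not lost on that bad locus. The clean way around this is to pass to the localization at a minimal prime of $I_j(\varphi_u)$ of minimal height and show such a prime cannot contain the whole block while still forcing $I_{j+u}(\phi)$ to vanish, contradicting the $G_{d-1}$ bound on $I$; alternatively, one simply cites \cite[Lemma 2.6]{CPW2}. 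Everything else is routine verification of hypotheses for the already-proved Proposition~\ref{dimSym}.
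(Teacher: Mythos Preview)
Your overall strategy matches the paper's, and item (iii) is exactly what the paper does, including your self-correction to take ${\bf p}=\langle x_2,\dots,x_d\rangle$. The obstacle you anticipate in (ii), however, is illusory, and the paper dissolves it with a one-line observation that also gives (i) for free.

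The point is that the containment goes the \emph{easy} direction: $I_{j+u}(\phi)\subseteq I_j(\phi_u)$ holds globally, with no need to ``factor off the diagonal block'', avoid a bad locus, or localize. The reason is purely combinatorial and does not use the special form \eqref{canonical} at all: $\phi$ has only $u$ columns outside $\phi_u$, so any choice of $j+u$ columns of $\phi$ must include at least $j$ columns of $\phi_u$; Laplace expansion along $j$ such columns then expresses the corresponding $(j+u)$-minor as an $R$-linear combination of $j$-minors of $\phi_u$. Hence $\Ht I_j(\phi_u)\ge \Ht I_{j+u}(\phi)\ge n-(j+u)+1$ for $n-d+2\le j+u\le n-1$, which is precisely the bound \eqref{G_s_in_terms_of_minors} for $E$ with $e=u+1$. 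Specializing to $j=n-u-1$ already gives $\Ht I_{n-u-1}(\phi_u)\ge 2$, so $\phi_u$ has maximal rank $n-1-u$ and (i) follows. Your attempt in (i) to read off the rank of $\phi_u$ from the block shape of \eqref{canonical} is more delicate than it looks (the ``blank'' entries in the top-left $u\times u$ block are arbitrary linear forms in $x_2,\dots,x_d$, not zeros), so the minor-containment route is both cleaner and safer. The fallback to \cite{CPW2} is unnecessary.
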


\begin{proof}
	(i)  Let $j\geq 1$ be an integer such that $j+u\leq n-1$.
	Any $(j+u)$-minor of $\phi$ involves at least $j$ columns of $\phi_u.$ Thus, by Laplace, every $(j+u)$-minor of $\phi$ is a linear combination over $R$ of $j$-minors of $\phi_u,$ that is, $I_{j+u}(\phi)\subset I_{j}(\phi_u).$
	
	On the other hand, suppose in addition that $n-(u+1)-(d-1)+2\leq j\leq n-(u+1),$ i.e., $n-(d-1)+1\leq j+u\leq n-1$.
	Since  $I$ satisfies condition $G_{d-1}$, then  ${\rm ht\,}I_{j+u}(\phi)\geq  n-(j+u)+1.$ Thus, ${\rm ht\,}I_{j}(\phi_u)\geq  n-(j+u)+1$ as well.
	In particular, for $j=n-u-1$, we find that $\Ht I_{n-u-1}(\phi_u)\geq 2.$
	This implies that ${\rm rk\,}E=u+1$.

	(ii) By  the argument in the previous item, now applying the criterion of (\ref{G_s_in_terms_of_minors}), with  ${\rm rk\,}E=u+1$, we have that  $E$ satisfies the $G_{d-1}$ condition.
	
	(iii) By the previous item, $E$ is an $R$-module with rank $e=u+1$ having minimal graded free resolution
	$$0\to R^{n-1-u}\stackrel{\varphi_u}{\longrightarrow}R^{n}\to E\to 0$$
	satisfying $G_{d-1}.$ We apply Proposition~\ref{dimSym} by verifying its three assumptions: 
	\begin{enumerate}
		\item[$\bullet$] $d+e-1\leq n$ because $u\leq n-d$ and $e=u+1;$
		\item[$\bullet$] $I_1(\phi_u)\subset {\fn} := \langle x_2,\ldots,x_d\rangle;$
		\item[$\bullet$] Since $\fn =(d-1)$, this assumption is emptily verified.
	\end{enumerate}
	Thus, $\dim \mathcal{S}(E)=n+\dim R/\fn=n+1.$
\end{proof}

Our last lemma concerns  the submatrix $B'$  of the Jacobian dual matrix $B$ as in (\ref{B}).

\begin{Lemma}\label{rankB'}
	The $(d-1)\times(n-1-u)$ matrix submatrix	$B'$ in {\rm (\ref{B})} has maximal rank.
\end{Lemma}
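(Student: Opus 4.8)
The plan is to deduce the maximal rank of $B'$ from the dimension computation in Lemma~\ref{rankB'}(iii) (the previously stated lemma whose proof ends with $\dim\mathcal{S}_R(E)=n+1$), exactly as item (v) of Lemma~\ref{petit_everything} was deduced from item (iv) there. First I would form the cokernel $M'$ of $B'$ over $S:=k[\tt]$, giving the finite presentation
\[
S^{n-1-u}\stackrel{B'}{\lar} S^{d-1}\lar M'\rar 0,
\]
so that $\rank B'=(d-1)-\rank M'$, and it suffices to show $\rank M'=0$, i.e.\ $M'$ is torsion.

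Next I would identify $\mathcal{S}_R(E)$ with a symmetric algebra over $S$. From the defining relation $\tt\cdot\phi=\xx\cdot B$ and the block shape \eqref{B} of $B$, the last $n-1-u$ columns of $\tt\cdot\phi$ (i.e.\ $\tt\cdot\phi_u$) equal $\xx'\cdot B'$, where $\xx'=(x_2,\ldots,x_d)$; meanwhile the first $u$ relations only involve $x_1$ and the $t_i$. Passing to $\mathcal{S}_R(E)=R[\tt]/I_1(\tt\cdot\phi_u)$ and reorganizing the grading so that the $x_i$ become the ``variables,'' one obtains an isomorphism $\mathcal{S}_R(E)\simeq S[x_2,\ldots,x_d]/I_1(\xx'\cdot B')\simeq \mathcal{S}_S(M')$ as $S$-algebras (one checks this presentation has the required $d-1$ generators and the relation matrix $B'$). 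Hence $\dim\mathcal{S}_S(M')=\dim\mathcal{S}_R(E)=n+1$ by Lemma~\ref{rankB'}(iii).

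Then I would apply the Huneke--Rossi formula to $M'$ over $S=k[\tt]$, localizing at the zero ideal:
\[
\dim\mathcal{S}_S(M')\geq \dim S/(0)+\mu(M'_{(0)})=n+\rank M'.
\]
Combining with $\dim\mathcal{S}_S(M')=n+1$ forces $\rank M'\leq 1$. To pin it down to $0$ I would use the block structure one step further: in $B$ the columns split as the ``diagonal'' part carrying $t_1,\ldots,t_u$ and the part $B'$, and since $\rank B=d$ by Lemma~\ref{petit_everything}(v), the $u$ columns $t_1 e_1+\cdots$ (which are clearly independent and span a rank-$u$ free summand modulo $B'$'s row block being zero there) contribute exactly $u$ to the rank of $B$, so the remaining block $B'$ must contribute $d-u$; but $B'$ has $d-1$ rows, hence $\rank B'=d-1$ is forced once we know the full rank is $d$ and the top-left $u\times u$ block is invertible. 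Put differently: the rank-$d$ matrix $B$ has its first $u$ rows already accounted for by the independent columns $t_1,\ldots,t_u$, so the last $d-1$ rows of $B$ — which on the last $n-1-u$ columns are precisely $B'$ — must have rank $d-1$, and that rank is realized inside $B'$ because the first $u$ columns of $B$ are zero in those rows.

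The main obstacle I anticipate is the bookkeeping in the isomorphism $\mathcal{S}_R(E)\simeq \mathcal{S}_S(M')$: one must verify carefully that passing to $E=\coker\phi_u$ and swapping the roles of $\xx$ and $\tt$ genuinely yields $M'=\coker B'$ with the correct number of generators (the block \eqref{B} is what guarantees exactly $d-1$ of them, the first row having been ``used up'' by the $t_i$'s), and that no extra relations are introduced or lost. Once that identification is clean, the rank count is immediate from Huneke--Rossi together with Lemma~\ref{petit_everything}(v), so I would present the proof as: (a) set up $M'$ and the presentation; (b) establish $\mathcal{S}_R(E)\simeq\mathcal{S}_S(M')$; (c) invoke Lemma~\ref{rankB'}(iii) and Huneke--Rossi to get $\rank M'\le 1$; (d) rule out $\rank M'=1$ via $\rank B=d$ and the shape of $B$, concluding $\rank B'=d-1$.
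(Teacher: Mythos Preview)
Your overall strategy coincides with the paper's: identify $\mathcal{S}_R(E)$ with a symmetric algebra over $S=k[\tt]$ and apply the Huneke--Rossi formula. The paper, however, does not strip off the first row. It sets $A=\left[\begin{smallmatrix}\boldsymbol0\\B'\end{smallmatrix}\right]$, a $d\times(n-1-u)$ matrix, lets $M=\coker A$ (with $d$ generators), and uses $\tt\cdot\phi_u=\xx\cdot A$ to obtain a genuine isomorphism $\mathcal{S}_R(E)\simeq\mathcal{S}_S(M)$. Then $\dim\mathcal{S}_S(M)=n+1$ and Huneke--Rossi forces $\rank M\le 1$; the zero row of $A$ makes $\rank M\ge 1$ automatic, so $\rank M=1$ and $\rank B'=\rank A=d-1$.

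Your version has a bookkeeping slip precisely at the spot you flagged. Since $I_1(\tt\cdot\phi_u)=I_1(\xx'\cdot B')$ involves only $x_2,\ldots,x_d$, the variable $x_1$ is free in $\mathcal{S}_R(E)=R[\tt]/I_1(\tt\cdot\phi_u)$; the correct identification is therefore
\[
\mathcal{S}_R(E)\ \simeq\ \mathcal{S}_S(M')[x_1],
\]
not $\mathcal{S}_R(E)\simeq\mathcal{S}_S(M')$. This gives $\dim\mathcal{S}_S(M')=n$, and Huneke--Rossi at the zero ideal yields $n+\rank M'\le n$, i.e.\ $\rank M'=0$ directly. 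So once the extra variable is accounted for, your step~(d) is unnecessary --- and your route then becomes a minor variant of the paper's.

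That is fortunate, because step~(d) as written does not go through: you assert that ``the first $u$ columns of $B$ are zero in those [last $d-1$] rows,'' but \eqref{B} only says the blank entries are linear forms in $k[\tt]$, not that they vanish. With a nonzero lower-left block there is no simple column-counting argument deducing $\rank B'=d-1$ from $\rank B=d$ alone.
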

\begin{proof}
	Set $A:=\left[\begin{matrix}\boldsymbol0\\B'\end{matrix}\right]$ for lighter reading, where $\boldsymbol0$ is the $1\times(n-u-1)$ null matrix, and let $M$ denote the cokernel of the map of $k[\tt]$-modules  defined by $A$, namely: 
	$$k[\tt]^{n-u-1}\stackrel{A}\lar k[\tt]^d\to M\to 0.$$
	Clearly, ${\rm rk\,} M=d-{\rm rk}\,A=d-{\rm rk}\,B'\geq 1.$
	On the other hand, $A$ is a Jacobian dual like matrix relative to $\phi_u$, that is,
	$$\tt\phi_{u}=\xx A.$$
	Therefore, ${\mathcal S}_{k[\tt]}(M)\simeq {\mathcal S}_R(E).$ Thus, $\dim {\mathcal S}_{k[\tt]}(M)=n+1.$ Hence, once more by the Huneke-Rossi formula, localizing at the zero ideal:
	$$n+\mu(M_{(0)})\leq n+1,$$
	that is, ${\rm rk\,}M={\rm rk}\,M_{(0)}=\mu(M_{(0)})\leq 1.$ Therefore, ${\rm rk\,}M=1.$ In particular, ${\rm rk\,}B'=d-1.$ 
\end{proof}

\section{Proofs of the main results}

\subsection{Proof of Proposition~\ref{birationality}}\label{Birationality tools}

We restate the proposition for convenience.

\begin{Proposition} \label{birationality_bis}
	 The rational map $\psi:\mathbb{P}_k^{d-1}\dasharrow \mathbb{P}_k^{n-1}$ defined by the linear span $I_{n-1}$ of $I$ is birational onto the image.
\end{Proposition}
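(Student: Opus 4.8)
The plan is to derive birationality from the Jacobian-dual criterion of \cite[Theorem 2.18]{DHS}, checking its numerical hypothesis by means of the canonical shape (\ref{B}) of $B$. First I would record that $I$ is generated by the $n$ maximal minors of $\varphi$, which are $k$-linearly independent forms of degree $n-1$ since the $n\times(n-1)$ presentation of $I$ is minimal; hence $I_{n-1}$ is exactly their $k$-span, $\psi:\pp_k^{d-1}\dasharrow\pp_k^{n-1}$ is the rational map they define, and the special fiber $\mathcal{F}_R(I)=k[\tt]/Q$ is the homogeneous coordinate ring of the image of $\psi$, where $Q=J\cap k[\tt]$ and $\mathcal{R}_R(I)=R[\tt]/J$. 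In this setting \cite[Theorem 2.18]{DHS} amounts to: $\psi$ is birational onto the image if and only if the matrix $\overline B$ obtained from $B$ by reducing its entries modulo $Q$ has rank $d-1$ over the domain $k[\tt]/Q$ --- equivalently, $I_d(B)\subseteq Q$ while $I_{d-1}(B)\not\subseteq Q$ --- and in that case representatives of the inverse map are the signed maximal minors of a suitable $d\times(d-1)$ submatrix of $B$. So the argument reduces to these two statements about ideals of minors of $B$.

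The inclusion $I_d(B)\subseteq Q$ is formal. Reducing the fundamental identity $\tt\cdot\varphi=\xx\cdot B$ modulo $J$ and using $I_1(\tt\cdot\varphi)\subseteq J$ gives $\xx\cdot B\equiv 0$ in $\mathcal{R}_R(I)$; since $\mathcal{R}_R(I)$ is a domain and $\xx\neq 0$ there, the $d\times(n-1)$ matrix $B$ has a nonzero left-kernel vector over it, hence rank at most $d-1$, so every $d\times d$ minor of $B$ vanishes in $\mathcal{R}_R(I)$ and, being an element of $k[\tt]$, lies in $J\cap k[\tt]=Q$.

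For $I_{d-1}(B)\not\subseteq Q$ I would combine the shape (\ref{B}), the maximal-rank statement $\rank B'=d-1$ (Lemma~\ref{rankB'}), and the initial-degree bound. First, $Q\neq 0$, since otherwise $\ell(I)=\dim\mathcal{F}_R(I)=\dim k[\tt]=n>d\geq\ell(I)$; hence Proposition~\ref{initial-degree} applies and ${\rm indeg}(Q)\geq d-1\geq 2$. Now $B'$ has $n-1-u\geq d-1$ columns (because $u\leq n-d$ by Lemma~\ref{petit_everything}) and rank $d-1$, so its first $d-2$ rows are linearly independent over the fraction field of $k[\tt]$ and therefore carry, on some $d-2$ of those columns, a nonzero $(d-2)\times(d-2)$ minor $\delta'$ of degree $d-2$. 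Forming the $(d-1)\times(d-1)$ submatrix of $B$ on rows $1,\dots,d-1$ and on the first column together with those $d-2$ columns, and expanding along its first row $(t_1,0,\dots,0)$ read off from (\ref{B}), one sees that $\pm\,t_1\,\delta'$ is a $(d-1)$-minor of $B$. Since $Q$ is prime, $t_1\delta'\in Q$ would force $t_1\in Q$ or $\delta'\in Q$; but $t_1$ and $\delta'$ are nonzero homogeneous elements of degrees $1$ and $d-2$, both strictly less than ${\rm indeg}(Q)$, so neither lies in $Q$. Therefore $t_1\delta'\notin Q$ and $I_{d-1}(B)\not\subseteq Q$.

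Together, the two containments and \cite[Theorem 2.18]{DHS} give that $\psi$ is birational onto the image. The only step that is not routine bookkeeping around the identity $\tt\cdot\varphi=\xx\cdot B$ is the last one --- upgrading the purely generic full-row-rank of $B'$ to the statement $I_{d-1}(B)\not\subseteq Q$ modulo the prime $Q$ --- and this is exactly where the initial-degree estimate of Proposition~\ref{initial-degree}, hence condition $G_{d-1}$ and the normalization (\ref{B}) (available only because $k$ is algebraically closed), come into play; I expect it to be the main obstacle.
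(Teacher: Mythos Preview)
Your argument is correct and follows essentially the same route as the paper: apply the Jacobian--dual birationality criterion of \cite[Theorem~2.18]{DHS} by exhibiting a $(d-1)$-minor of $B$ of the form $t_i\cdot\delta'$ (with $\delta'$ a nonzero $(d-2)$-minor of $B'$, available by Lemma~\ref{rankB'}) and then invoking the primeness of $Q$ together with the initial-degree bound of Proposition~\ref{initial-degree} to conclude it survives modulo $Q$. The only cosmetic differences are that the paper picks $t_u$ rather than $t_1$ and omits both your explicit check that $Q\neq 0$ and your verification that $I_d(B)\subseteq Q$ (the latter being automatic from $\xx\cdot B\equiv 0$ in the domain $\mathcal{R}_R(I)$, hence not needed for the criterion as the paper states it).
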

\begin{proof}
	Note that the image of $\psi$ is identified with ${\rm Proj}(\mathcal{F}_R(I))$.
	The proof follows pretty much the steps of the proof of \cite[Theorem 2.4]{DRS}, via the birationality criterion of \cite[Theorem 2.18]{DHS}. 
		Namely, let then $Q\subset k[\tt]$ denote the homogeneous defining ideal of $\mathcal{F}_R(I)$.
 We will show that there is a square submatrix $M$ of the matrix $B$ in \eqref{B}, of order $d-1$, such that $\det(M)\neq 0 \text{ (mod } Q)$, that is, ${\rm rk\,}_{\mathcal{F}_R(I)}(B)\geq d-1$. From this, the result follows by the above criterion.
	
	By Lemma~\ref{rankB'}, $I_{d-1}(B')\neq 0.$ Thus, since $I_{d-1}(B')\subseteq I_{d-2}(B'),$  there is a square submatrix $M'$ of $B'$ of order $d-2$ such that $\det(M')\neq 0.$ Let 
	$$M=\left[
	\begin{array}{cc}
		t_{n} & 0 \\
		* & M'
	\end{array}
	\right],$$
	a square submatrix of $B$ of order $d-1$. Clearly, $\det(M)=t_n\det(M')\neq 0$. 
	
	We assert that $\det(M)\not\subset Q$.
	Otherwise, since $Q$ is a prime, then either $t_u\in Q$ or else $\det(M')\in Q$, both of degrees strictly  less than $d-1$.
	But, since $I$ satisfies $G_{d-1}$,  Proposition~\ref{initial-degree} says that the initial degree of $Q$ is at least $d-1$.
	
	Thus, $\det (M)\neq\, 0 \,{\rm mod}\, Q$, and since $M$ is a submatrix of $B$, then ${\rm rk}_{{\mathcal F}_R(I)}(B)\geq d-1$ as was to be shown.
\end{proof}
Throughout, $Q$ will stand for the homogeneous defining ideal of $\mathcal{F}_R(I)$.

For any $d\times (d-1)$ submatrix $\mathcal{B}$ of the Jacobian dual matrix $B$  we   denote the sequence of the its $d$ ordered signed $(d-1)$-minors  by $\boldsymbol{\delta}_{\mathcal{B}}=\{\delta_\mathcal{B}^{1},\ldots,\delta_{\mathcal{B}}^{d}\}.$  The role of these sequences of minors  will  depend on the  rank of $\mathcal{B}$ modulo $Q.$ Thus, if the rank of $\mathcal{B}$ module $Q$ is less than $d-1$ then, obviously,  $\boldsymbol{\delta}_{\mathcal{B}}\subset Q.$  
Next is an overview of this role in the case where the rank is exactly $d-1.$

\begin{Corollary}\label{birationality_excerpts}
Set $R=k[x_1,\ldots,x_d]$, and	let $\psi$ be as in the previous proposition. The following statements hold$:$
	\begin{enumerate}
		\item[\rm (i)]  There is a $d\times(d-1)$ submatrix $\mathcal{B}$ of $B$ of rank $d-1$ modulo $Q$.
		\item[\rm (ii)] For every $d\times(d-1)$ submatrix $\mathcal{B}$ of $B$ of rank $d-1$ modulo $Q$,  the sequence of its ordered signed $(d-1)$-minors  gives a representative of the inverse to $\psi.$
		\item[\rm (iii)] For each matrix $\mathcal{B}$ as in the previous item, along with the sequence  $\boldsymbol{\delta}_{\mathcal{B}}=\{\delta_\mathcal{B}^{1},\ldots,\delta_{\mathcal{B}}^{d}\}$ of its $d$ ordered signed $(d-1)$-minors defining an inverse to $\psi$, there is an isomorphism  of $R[\tt]/Q$-algebras
		\begin{equation}\label{old_isom}
		\partial_{\mathcal{B}}: \mathcal{R}_R(I){\stackrel{\sim}{\rightarrow}} \mathcal{R}_{k[\tt]/Q}(\langle \boldsymbol{\delta}_{\mathcal{B}}, Q\rangle/\langle Q\rangle),
		\end{equation}
		whose restriction to $R=k[x_1,\ldots,x_d]$ maps $x_i\mapsto \delta_{\mathcal{B}}^i$ for every $1\leq i\leq d.$
	\end{enumerate}
\end{Corollary}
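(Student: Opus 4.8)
The plan is to derive all three items from Proposition~\ref{birationality_bis} (equivalently Proposition~\ref{birationality}) and the apparatus of \cite[Theorem 2.18]{DHS}, reading off from its proof the structure it produces. For item (i), I would simply recall that the proof of Proposition~\ref{birationality_bis} already exhibited an explicit $(d-1)\times(d-1)$ submatrix $M$ of $B$ with $\det(M)\notin Q$; extending $M$ to a $d\times(d-1)$ submatrix $\mathcal{B}$ of $B$ (there are $d-(d-1)=1$ many extra rows of $B$ not used by $M$, giving one such $\mathcal{B}$, or more if one is less specific) automatically gives $\operatorname{rk}_{\mathcal{F}_R(I)}(\mathcal{B})\geq d-1$, and since $\mathcal{B}$ has only $d-1$ columns the rank is exactly $d-1$. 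So (i) is essentially a restatement of what was proved.

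For item (ii), I would invoke the birationality criterion of \cite[Theorem 2.18]{DHS} in the precise form it is stated there: the criterion not only detects birationality by the condition $\operatorname{rk}_{\mathcal{F}_R(I)}(B)\geq d-1$, but, for \emph{any} $d\times(d-1)$ submatrix $\mathcal{B}$ realizing that rank, asserts that the tuple $\boldsymbol{\delta}_{\mathcal{B}}=\{\delta_{\mathcal{B}}^1,\ldots,\delta_{\mathcal{B}}^d\}$ of its signed maximal minors, viewed as forms in $k[\tt]$, represents the inverse rational map to $\psi$ (this is the content of the ``dual Jacobian dual'' construction and the role of the relation $\tt\cdot\varphi=\xx\cdot B$). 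Concretely, the relation $\xx\cdot B\equiv 0$ modulo $I_1(\tt\cdot\varphi)$ forces $\xx\cdot\mathcal{B}\equiv 0$ as well, so the row vector $\xx$ lies in the kernel of $\mathcal{B}$ over the relevant ring; over the function field of $\mathcal{F}_R(I)$, where $\mathcal{B}$ has rank $d-1$, that kernel is one-dimensional and is spanned by $\boldsymbol{\delta}_{\mathcal{B}}$ (Cramer's rule), hence $\xx$ and $\boldsymbol{\delta}_{\mathcal{B}}$ are proportional, which is exactly the statement that $\boldsymbol{\delta}_{\mathcal{B}}$ defines a map $\mathbb{P}^{n-1}\dasharrow\mathbb{P}^{d-1}$ inverting $\psi$ on the image.

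For item (iii), I would again quote \cite[Theorem 2.18]{DHS}, which packages the birational correspondence as an isomorphism of bigraded algebras between the Rees algebra of the forms defining $\psi$ (i.e.\ $\mathcal{R}_R(I)$, the bihomogeneous coordinate ring of the graph of $\psi$) and the Rees algebra of the forms $\boldsymbol{\delta}_{\mathcal{B}}$ defining the inverse, the latter taken over the coordinate ring $k[\tt]/Q$ of the image; the map on the base rings is $x_i\mapsto\delta_{\mathcal{B}}^i$, which is precisely because $\xx$ and $\boldsymbol{\delta}_{\mathcal{B}}$ are the two (mutually inverse) coordinate tuples. The one point needing care is that $\langle\boldsymbol{\delta}_{\mathcal{B}},Q\rangle/\langle Q\rangle$ is a genuine ideal of $k[\tt]/Q$ generated in a single degree, so that its Rees algebra is defined; this follows since all the $\delta_{\mathcal{B}}^i$ are $(d-1)$-minors of a linear matrix, hence forms of the same degree $d-1$ in $k[\tt]$. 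I expect the main obstacle to be bookkeeping rather than mathematics: one must check that the hypotheses of \cite[Theorem 2.18]{DHS} are literally met in our setting (graph of a rational map between projective spaces, image with homogeneous coordinate ring $k[\tt]/Q$, the ``dual'' matrix being $B$ by virtue of $\tt\cdot\varphi=\xx\cdot B$) and that the isomorphism is stated there as an isomorphism of $R[\tt]/Q$-algebras with the indicated behaviour on $x_i$; granting that, items (i)--(iii) are immediate consequences and the corollary is proved.
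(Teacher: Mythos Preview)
Your proposal is correct and follows the same approach as the paper: item~(i) is extracted from the explicit submatrix $M$ built in the proof of Proposition~\ref{birationality_bis}, and item~(ii) is precisely the content of \cite[Theorem~2.18 and its Supplement]{DHS}. The one discrepancy is in item~(iii): the paper does not source the Rees algebra isomorphism from \cite{DHS} but rather from \cite[Proposition~2.1]{Sim}, so you should verify that the statement you need is actually recorded in the reference you cite (or simply switch to the reference the paper uses); mathematically the argument you sketch---that the graph of $\psi$ coincides with the graph of its inverse, whence the two Rees algebras are isomorphic via $x_i\mapsto\delta_{\mathcal{B}}^i$---is exactly the mechanism behind that proposition.
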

\begin{proof}
	(i) This  is essentially an outcome of the argument in the proof of Proposition~\ref{birationality_bis}.
	
	 (ii) This stems from  \cite[Theorem 2.18, (a) and Supplement (ii)]{DHS}.	
	
	(iii) This  is \cite[Proposition 2.1]{Sim}. Note, for further clarification, that $\mathcal{R}_R(I)\simeq R[\tt]/\mathcal{J}$, where $\mathcal{J}$ contains $Q$, so $\mathcal{R}_R(I)$ is naturally an $R[\tt]/Q$-algebra.
\end{proof}

\subsection{Proof of Theorem~\ref{main_thm}}

A main tool in this section is a specialization toward one dimension less.
With this in mind we choose a suitable form in $R=k[x_1,\dots,x_d]$ which is regular over $R/I$.

Namely, since $\Ht I_{n-d+1}(\varphi)= d-1$ (Lemma 3.1(1)) and  $I_{n-d+1}(\varphi)\supseteq I_{n-d+2}(\varphi)\supseteq\cdots\supseteq I_{n-1}(\varphi)=I$ we have $\Ht I_j(\varphi)\leq d-1$ for $n-d+2\leq j\leq n-1.$ Thus, by prime avoidance, there is linear form $\mathfrak{f}=x_1-b_2x_2-\cdots-b_dx_d\in k[x_1,\dots,x_d]_1$  avoiding the minimal primes of the minors ideal $I_j(\varphi)$ for $n-d+2\leq j\leq n-1.$  In particular,  $\mathfrak{f}$ is regular over $R/I$ and $\dim R/\langle I_{j}(\phi),\mathfrak{f}\rangle=\dim R/I_{j}(\phi)-1$ for every $n-d+2\leq j\leq n-1.$  

In particular, identifying $R/\langle \mathfrak{f}\rangle \simeq k[x_2,\dots,x_d]=:\overline{R}$, the natural projection $R\twoheadrightarrow R/\langle \mathfrak{f}\rangle$ maps $I$ onto the height $2$ perfect ideal   $\overline{I}\subset\overline{R}$ satisfying $G_{d-1}$, with an $n\times(n-1)$ syzygy (Hilbert--Burch)  matrix over $k[x_2,\dots,x_d]$ of the following  form

\begin{equation}\label{canonicalbis}
	\overline{\varphi}=\left[
	\begin{array}{cccccc}
		b_2x_2+\cdots+b_dx_d+a_1 & &  &&& \\
		& \ddots &  & &&\\
		&   & b_2x_2+\cdots+b_dx_d+a_u &&& \\
		&  &  &&&  \\
	\end{array}
	\right],
\end{equation}
where  the $a_i$'s and the blank entries are linear forms. Thus, by a similar reason, the corresponding Jacobian dual matrix -- namely, the unique $(d-1)\times (n-1)$ matrix $\overline{B}$ with linear entries in $k[\tt]$ such that $\tt\cdot\overline{\varphi}=[x_2\cdots x_d]\overline{B}$ -- has the following form:

{\small\begin{equation}\label{Bbarra-formato}
		\overline{B}=\left[\begin{array}{cccccc} 
			\ell_{2,1}+b_2\ell_{1,1} & \cdots & \ell_{2,u}+b_2\ell_{1,u} & \ell_{2,u+1}+b_2\ell_{1,u+1} & \cdots & \ell_{2,n-1}+b_2\ell_{1,n-1}\\
			\vdots & \ddots & \vdots & \vdots & \ddots & \vdots\\
			\ell_{d,1}+b_d\ell_{1,1} & \cdots & \ell_{d,u}+b_d\ell_{1,u} & \ell_{d,u+1}+b_d\ell_{1,u+1} & \cdots & \ell_{d,n-1}+b_d\ell_{1,n-1}
		\end{array}
		\right],
\end{equation}}
where $\ell_{i,j}$ is the $i,j$-th entry of $B,$ a linear form in $k[\tt]$. By \eqref{B}, $\ell_{1,j}=t_j$ if $1\leq j\leq u$ and $\ell_{1,j}=0$ if $u+1\leq j\leq n-1.$

Given a $d\times(d-1)$ submatrix $\mathcal{B}$ of $B$, let $\overline{\mathcal{B}}$ stand for the ``corresponding'' $(d-1)\times(d-1)$ submatrix of $\overline{B}$. 
That is to say, if $\mathcal{B}$ is formed with the columns of $B$ indexed by $v_1,\dots,v_{d-1}$ ($1\leq v_1<\cdots<v_{d-1}\leq n-1$),  then  $\overline{\mathcal{B}}$ is formed with the columns of $\overline{B}$  with same indices. 
By \eqref{Bbarra-formato}, for every $1\leq j\leq d-1$, the $j$th row of $\overline{\mathcal{B}}$  is the $(j+1)$th row of $\mathcal{B}$ plus $b_{j+1}$ times the first row of $\mathcal{B}.$

\begin{Lemma}\label{specialization} With the above  notation  along with {\rm Proposition~\ref{birationality_bis}}, one has$:$
	\begin{enumerate}
		\item[\rm(i)] The ideal $\langle I_1(\tt\cdot\overline{\varphi}), I_{d-1}(\overline{B})\rangle\subset \overline{R}[\tt]$ is a bihomogeneous defining ideal of  the Rees algebra of $\overline{I}$ over $ \overline{R}$.
		\item[\rm(ii)]  For every $d\times(d-1)$ submatrix $\mathcal{B}$ of $B$
		one has $\det(\overline{\mathcal{B}})=\mathfrak{f}(\delta^1_{\mathcal{B}},\ldots,\delta^d_{\mathcal{B}})$.
		In particular, if the rank of $\mathcal{B}$ modulo $Q$ is  less than $d-1$, then $\det(\overline{\mathcal{B}})\in Q.$ 
	\end{enumerate}	
\end{Lemma}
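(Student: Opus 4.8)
The statement has two parts. For (i), the goal is to show that $\overline{I}$ — the image of $I$ under reduction modulo $\mathfrak{f}$ — satisfies all the hypotheses of Morey–Ulrich's Theorem~\ref{MainM-U}, so that its Rees algebra has the expected form $\langle I_1(\tt\cdot\overline{\varphi}), I_{d-1}(\overline{B})\rangle$. The ambient ring here is $\overline{R}=k[x_2,\dots,x_d]$, a polynomial ring in $d-1$ variables, so the relevant Morey–Ulrich hypothesis is condition $G_{d-1}$ (which plays the role of $G_{\dim}$ for $\overline{R}$) together with $\mu(\overline{I})\geq (d-1)+1 = d$. The count $\mu(\overline{I})=n\geq d+1$ is immediate since $\mathfrak{f}$ is a nonzerodivisor on $R/I$ and the presentation specializes without collapse. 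The substantive point is that $\overline{I}$ satisfies $G_{d-1}$; this is exactly what the choice of $\mathfrak{f}$ was engineered to give, via prime avoidance against the minimal primes of $I_j(\varphi)$ for $n-d+2\le j\le n-1$, so that $\Ht I_j(\overline{\varphi}) = \Ht I_j(\varphi) - 1$ for those $j$. Combining this with $\Ht I_j(\varphi)\ge n-j+1$ (which is $G_{d-1}$ for $I$) in the range $n-(d-1)+1\le j\le n-1$ yields $\Ht I_j(\overline{\varphi})\ge n-j$, which is precisely condition $G_{d-1}$ for $\overline{I}$ over the $(d-1)$-dimensional ring $\overline{R}$ by the criterion \eqref{G_s_for ideals-bis} (with $d$ replaced by $d-1$, $m=n-1$). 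One must also confirm $\overline{I}$ is perfect of codimension $2$ — this follows because $\mathfrak{f}$ is regular on $R/I$, so the Hilbert–Burch resolution of $I$ tensored with $\overline{R}$ remains a resolution of $\overline{I}$, keeping projective dimension $1$ and codimension $2$. Then Theorem~\ref{MainM-U}(iii) applies verbatim.

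\textbf{For part (ii)}, the identity $\det(\overline{\mathcal{B}}) = \mathfrak{f}(\delta^1_{\mathcal{B}},\dots,\delta^d_{\mathcal{B}})$ is a determinant manipulation. The plan is to exploit the row relation recorded just before the lemma: the $j$th row of $\overline{\mathcal{B}}$ equals the $(j+1)$st row of $\mathcal{B}$ plus $b_{j+1}$ times the first row of $\mathcal{B}$, for $1\le j\le d-1$. Write $\mathcal{B}$ as the $d\times(d-1)$ matrix with rows $r_0, r_1,\dots,r_{d-1}$ (so $r_0$ is the "first row" and the signed maximal minor $\delta^i_{\mathcal{B}}$ is, up to sign, the minor deleting row $r_{i-1}$). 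Now form the $d\times d$ matrix $C$ obtained by bordering $\mathcal{B}$ with an extra first column whose entries are $1, b_2, b_3, \dots, b_d$ (reading down the rows $r_0,\dots,r_{d-1}$). Expanding $\det C$ along its first column gives, by definition of the signed minors, exactly $\sum_{i=1}^{d} (\text{entry}_i)\cdot(\pm\text{minor}) = \delta^1_{\mathcal{B}} + b_2\,\delta^2_{\mathcal{B}} + \cdots + b_d\,\delta^d_{\mathcal{B}}$ — but wait, this has coefficients $1, b_2,\dots,b_d$ against $\delta^1,\dots,\delta^d$, whereas $\mathfrak{f}(x_1,\dots,x_d) = x_1 - b_2 x_2 - \cdots - b_d x_d$ evaluated at $(\delta^1,\dots,\delta^d)$ is $\delta^1 - b_2\delta^2 - \cdots - b_d\delta^d$. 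The sign discrepancy is absorbed by the standard alternating-sign convention in Laplace expansion and the definition of the "ordered signed minors"; I would fix the bordering column as $1,-b_2,\dots,-b_d$ (or track signs through the convention used in \cite{DHS}) so that $\det C = \mathfrak{f}(\boldsymbol{\delta}_{\mathcal{B}})$. On the other hand, performing on $C$ the column operations that subtract $b_{j+1}$ times... — more directly: apply to $C$ the row-reduction that, using row $r_0$, clears the first column below its top entry; after doing so the bottom $(d-1)$ rows become precisely the rows of $\overline{\mathcal{B}}$ (by the stated row relation, with the sign on $b$ matching the bordering-column sign), the first column becomes $(1,0,\dots,0)^{t}$, and expanding along it gives $\det C = \det(\overline{\mathcal{B}})$. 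Equating the two evaluations of $\det C$ yields the identity. Finally, the "in particular" clause: if $\rank_{Q}\mathcal{B} < d-1$ then every $(d-1)$-minor of $\mathcal{B}$ lies in $Q$, so each $\delta^i_{\mathcal{B}}\in Q$; since $Q$ is a homogeneous ideal of $k[\tt]$ and $\det(\overline{\mathcal{B}})$ is a polynomial expression $\mathfrak{f}(\delta^1_{\mathcal{B}},\dots,\delta^d_{\mathcal{B}})$ with $k$-coefficients (the $b_i$ are scalars) in the $\delta^i_{\mathcal{B}}$, it follows that $\det(\overline{\mathcal{B}})\in Q$.

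\textbf{Main obstacle.} The genuinely delicate point is not part (ii) — that is bookkeeping once the right bordered matrix is set up — but the verification in part (i) that $\overline{I}$ still satisfies $G_{d-1}$ and remains perfect of codimension $2$ simultaneously, because one is specializing by a single element and must be sure the prime-avoidance choice of $\mathfrak{f}$ controls \emph{all} the relevant Fitting ideals at once, not merely $I$ itself. I would double-check the range of $j$: condition $G_{d-1}$ for $\overline{I}$ (over a ring of dimension $d-1$) by \eqref{G_s_for ideals-bis} requires $\Ht I_j(\overline{\varphi})\ge (n-1)-j+1 = n-j$ for $(n-1)-(d-1)+1\le j\le (n-1)-1$, i.e. $n-d+1\le j\le n-2$; for $j\ge n-d+2$ this is the specialized inequality $\Ht I_j(\varphi)-1\ge n-j$ coming from $G_{d-1}$ of $I$, while for the single borderline value $j=n-d+1$ one needs a separate argument — but here $I_{n-d+1}(\varphi)$ has height exactly $d-1$ (Lemma~\ref{petit_everything}(i)) and $\mathfrak{f}$ avoids its minimal primes, so $\Ht I_{n-d+1}(\overline{\varphi}) = d-2 = n - (n-d+1) - 1$... which is one short of what $G_{d-1}$ demands. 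This is consistent with and in fact forces the conclusion that $\overline{I}$ satisfies $G_{d-1}$ but may genuinely fail $G_{d-1}$ at the very top — no: re-examining, the Morey–Ulrich theorem over $\overline{R}$ needs $G_{d-1}$, and the index range in \eqref{G_s_for ideals-bis} for $s=d-1$, $m=n-1$ is $m-s+1 = n-d+1 \le j \le m-1 = n-2$, so $j=n-d+1$ \emph{is} in range; hence one must argue that $\Ht I_{n-d+1}(\overline\varphi)\ge n-d$, equivalently $=d-2$ is too small unless $n-d = d-2$, i.e. the bound is $n-j = d-1$ at $j=n-d+1$ — so we need height $d-1$, but specialization only gives $d-2$. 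The resolution of this apparent gap is the crux I would focus on: either $\overline{I}$ satisfies $G_{d-1}$ for a subtler reason (the minor ideal $I_{n-d+1}(\overline\varphi)$ behaves better than the naive codimension drop suggests because of the block form \eqref{canonicalbis}), or — as I now believe — the correct reading is that over $\overline R$ of dimension $d-1$ one only needs $G_{d-2}$ in \eqref{G_s_for ideals-bis}'s sense for Morey–Ulrich, making the index range $n-d+2\le j\le n-2$, which the specialization handles cleanly. I would settle this indexing carefully before writing the final proof, as it is the one place the argument could slip.
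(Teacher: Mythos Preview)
Your approach to part (ii) is correct and essentially the same as the paper's: the paper writes $\mathcal{B}$ with rows $L_1,\dots,L_d$, expands $\det(\overline{\mathcal{B}})=\det[L_2+b_2L_1;\dots;L_d+b_dL_1]$ by multilinearity, permutes $L_1$ to the top in each summand picking up the right sign, and identifies the result as $\delta^1_{\mathcal B}-b_2\delta^2_{\mathcal B}-\cdots-b_d\delta^d_{\mathcal B}=\mathfrak f(\boldsymbol\delta_{\mathcal B})$. Your bordered-matrix device (first column $(1,-b_2,\dots,-b_d)^t$, then either Laplace-expand or row-reduce) is just a repackaging of the same linear algebra; once you fix the sign convention $\delta^i_{\mathcal B}=(-1)^{i+1}\det(\mathcal B\text{ with row }i\text{ deleted})$ it goes through cleanly.

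The ``main obstacle'' you identify in part (i) is not real; it is produced by two bookkeeping slips that only partially cancel. First, $\overline\varphi$ is still an $n\times(n-1)$ matrix (see \eqref{canonicalbis}), so in criterion \eqref{G_s_for ideals-bis} one has $m=n$, not $m=n-1$. With $s=d-1$ the correct index range for $G_{d-1}$ of $\overline I$ is therefore $n-d+2\le j\le n-1$, and the borderline value $j=n-d+1$ that worried you never enters. Second, heights are \emph{preserved}, not lowered, under this specialization: since $\mathfrak f$ avoids the minimal primes of $I_j(\varphi)$ for $n-d+2\le j\le n-1$, one has $\dim R/\langle I_j(\varphi),\mathfrak f\rangle=\dim R/I_j(\varphi)-1$, and since $\dim\overline R=d-1$ this gives
\[
\Ht_{\overline R} I_j(\overline\varphi)=(d-1)-\bigl(\dim R/I_j(\varphi)-1\bigr)=d-\dim R/I_j(\varphi)=\Ht_R I_j(\varphi)\ge n-j+1,
\]
the last inequality being $G_{d-1}$ for $I$. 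This is exactly the bound required for $G_{d-1}$ of $\overline I$ over $\overline R$, so there is no gap. (Note that $\mathfrak f$ was only chosen to avoid primes of $I_j(\varphi)$ for $n-d+2\le j\le n-1$; nothing about $j=n-d+1$ is needed or claimed.) With these corrections your argument for (i) is fine and matches the paper, which in fact records the conclusion ``$\overline I$ satisfies $G_{d-1}$'' in the setup preceding the lemma and then simply invokes Theorem~\ref{MainM-U}(iii) in one line.
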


\begin{proof}
	(i) Since $\overline{I}$ satisfies $G_{d-1}$ and $k$ is infinite, this is the main part of Theorem~\ref{MainM-U}, (iii). 
	
	(ii) Write 
	$$\mathcal{B}=\left[\begin{matrix}
		L_1\\
		\vdots\\
		L_d
	\end{matrix}\right],$$ 
	where  $L_j=\left[\begin{matrix}\ell_{j,v_1}&\cdots&\ell_{j,v_{d-1}}\end{matrix}\right]$ for $1\leq j\leq d.$
	Then,  by \eqref{Bbarra-formato}
	\begin{eqnarray*}
		\det(\overline{\mathcal{B}})&=&\det\left[
		\begin{matrix}
			L_2+b_2L_1 \\
			\vdots  \\
			L_{d}+b_dL_1\\
		\end{matrix}
		\right]\\
		&=&\det\left[
		\begin{matrix}
			L_2 \\
			L_3\\
			\vdots   \\
			L_{d}\\
		\end{matrix}
		\right] +b_2\det\left[
		\begin{matrix}
			L_1 \\
			L_3\\
			\vdots\\
			L_d
		\end{matrix}
		\right]+
		b_3\det\left[
		\begin{matrix}
			L_2\\
			L_1\\
			\vdots\\
			L_{d}
		\end{matrix}
		\right] +\cdots+b_d\det\left[
		\begin{matrix}
			L_2 \\
			\vdots\\
			L_{d-1} \\
			L_1
		\end{matrix}
		\right]\\
		&=&
		\det\left[
		\begin{matrix}
			L_2 \\
			L_3\\
			\vdots   \\
			L_{d}\\
		\end{matrix}
		\right] +b_2\det\left[
		\begin{matrix}
			L_1 \\
			L_3\\
			\vdots\\
			L_d
		\end{matrix}
		\right]-
		b_3\det\left[
		\begin{matrix}
			L_1\\
			L_2\\
			\vdots\\
			L_{d}
		\end{matrix}
		\right] +\cdots+(-1)^{d-2}b_d\det\left[
		\begin{matrix}
			L_1\\
			L_2 \\
			\vdots\\
			L_{d-1} 
		\end{matrix}
		\right]\\
		&=&\delta^1_{\mathcal{B}}-b_2\delta^2_{\mathcal{B}}-b_3\delta^3_{\mathcal{B}}-\cdots-b_d\delta^d_{\mathcal{B}}\\
		&=&\mathfrak{f}(\delta^1_{\mathcal{B}},\ldots,\delta^d_{\mathcal{B}}).
	\end{eqnarray*}
	The complementary statement is a consequence of the fact that when the rank of $\mathcal{B}$ modulo $Q$ is less than $d-1$, then $\delta_{\mathcal{B}}^i\in Q$ for $1\leq i\leq d.$
\end{proof}

For convenience, in order to state again the main result, we retrieve the terminology and notation.
Let $R = k[x_1,\ldots, x_d]$  denote a standard graded polynomial ring over 
an algebraically closed  field $k$, and let $I \subset R$ be a perfect ideal of codimension $2$ with an $n\times (n-1)$ linear presentation matrix $\phi$ such that $n \geq d+1$ and $I_1(\phi)=\langle x_1,\ldots,x_d \rangle$.
Let $\mathcal{J}$ and $Q$ respectively denote the bihomogeneous defining ideal of the Rees algebra $\mathcal{R}_R(I)$  and the homogeneous defining ideal of the special fiber $\mathcal{F}_R(I)$, and let $B$ denote the associated Jacobian dual matrix.

Let $\{\mathcal{B}_1,\ldots ,\mathcal{B}_r\}$ be the total set of matrices as in Proposition~\ref{birationality_excerpts}(ii).
In order to simplify the notation, we write  $\boldsymbol{\delta}_i:=\boldsymbol{\delta}_{\mathcal{B}_i}$ for every $1\leq i\leq r,$ where $\boldsymbol{\delta}_{\mathcal{B}_i}=\{\delta_{\mathcal{B}_i}^1,\ldots,\delta_{\mathcal{B}_i}^{d}\}$ in a previous notation.
In addition, for $1\leq i\leq r$ we take liberty in denoting $\xx\mapsto{\boldsymbol{\delta}_i}$ as a mnemonic for the restriction to  $R=k[x_1,\ldots,x_d]$ of the isomorphism $\partial_{\mathcal{B}_i}$ of (\ref{old_isom}).

\begin{Theorem}
		   If $I$ satisfies condition $G_{d-1},$ but not condition $G_d,$ then$:$
	\begin{enumerate}
		\item[{\rm (i)}]   $\mathcal{J}=\langle I_1(\tt\cdot\varphi), Q\rangle$ as ideals of $R[\tt]$, i.e., $I$ is an ideal of fiber type.
		\item[{\rm (ii)}] The initial degree of $Q$ is $d-1;$ in particular, the inclusion of ideals
		$$\langle I_1(\tt\cdot\varphi), I_d(B)\rangle \subset \langle I_1(\tt\cdot\varphi), Q\rangle$$ 
		 is proper. Hence, the Rees algebra of $I$ is  not of the expected form.
	\end{enumerate}
\end{Theorem}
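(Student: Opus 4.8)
The strategy is to transfer the known presentation of the Rees algebra of $\overline{I}$ (which is covered by the Morey--Ulrich theorem, via Lemma~\ref{specialization}(i)) back up to $\mathcal{R}_R(I)$, using the family of inverse representatives $\boldsymbol{\delta}_i$ furnished by Corollary~\ref{birationality_excerpts} as the bridge. First I would fix the containment $\langle I_1(\tt\cdot\varphi), Q\rangle\subseteq\mathcal{J}$, which is automatic since both $I_1(\tt\cdot\varphi)$ and $Q$ lie in the defining ideal of the Rees algebra. For the reverse containment I would argue as follows. Let $F\in\mathcal{J}$ be a bihomogeneous element; working modulo $I_1(\tt\cdot\varphi)$ we may assume $F$ is an element of the $R$-torsion of the symmetric algebra, i.e.\ $F$ represents an element killed by some power of $\fm=I_1(\phi)$. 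The point is to show such an $F$ already lies in $\langle I_1(\tt\cdot\varphi), Q\rangle$. Here I would invoke the isomorphism $\partial_{\mathcal{B}_i}$ of \eqref{old_isom}: applying $\partial_{\mathcal{B}_i}$ sends the equation $F=0$ in $\mathcal{R}_R(I)$ to an equation in $\mathcal{R}_{k[\tt]/Q}(\langle\boldsymbol{\delta}_i,Q\rangle/Q)$, under $x_j\mapsto\delta^j_{\mathcal{B}_i}$. Since over $k[\tt]/Q$ the ideal $\langle\boldsymbol{\delta}_i,Q\rangle/Q$ is generated by the entries of a sequence of minors of $B$, and since after specialization by $\mathfrak{f}$ one lands in $\overline{R}[\tt]$ where the defining ideal of $\mathcal{R}_{\overline{R}}(\overline{I})$ is exactly $\langle I_1(\tt\cdot\overline{\varphi}), I_{d-1}(\overline{B})\rangle$ by Lemma~\ref{specialization}(i), the relevant relations among the $\delta^j_{\mathcal{B}_i}$ are forced to come from $I_1(\tt\cdot\overline{\varphi})$ and from the minors $\det(\overline{\mathcal{B}})$; but by Lemma~\ref{specialization}(ii) each $\det(\overline{\mathcal{B}})$ equals $\mathfrak{f}(\delta^1_{\mathcal{B}},\ldots,\delta^d_{\mathcal{B}})$, so these pull back into the image of $I_1(\tt\cdot\varphi)$ plus $Q$ under $\partial_{\mathcal{B}_i}$. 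Running $\mathcal{B}_i$ over all $r$ matrices in the family and assembling, one concludes $F\in\langle I_1(\tt\cdot\varphi), Q\rangle$, proving (i). The careful bookkeeping of degrees and of what ``modulo $\mathfrak{f}$'' does to a bihomogeneous relation is where the real work sits.

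For item (ii), the lower bound $\operatorname{indeg}(Q)\geq d-1$ is immediate from Proposition~\ref{initial-degree} with $s=d-1$ (noting $Q\neq\{0\}$, since $\ell(I)=d<n-1$ forces the special fiber to have a nontrivial defining ideal whenever $n\geq d+1$; this follows from part (i) of Theorem~\ref{main_thm} together with $\dim\mathcal{F}_R(I)=\ell(I)=d$ and $\mathcal{F}_R(I)$ being a quotient of $k[\tt]$ in $n$ variables). To get the exact value $d-1$ I would exhibit a nonzero element of $Q$ of degree $d-1$: take any $d\times(d-1)$ submatrix $\mathcal{B}$ of $B$ whose rank modulo $Q$ is \emph{not} maximal --- such a submatrix exists because $I$ fails $G_d$, so the generic rank of $B$ modulo $Q$ cannot be as large as it is in the $G_d$ case, concretely because $\mathcal{F}_R(I)$ has a ``defect'' reflected in the chaos integer $u<n-d$ --- and then the ordered signed $(d-1)$-minors $\delta^j_{\mathcal{B}}$ all lie in $Q$ by the remark preceding Corollary~\ref{birationality_excerpts}, and at least one of them is a nonzero form of degree $d-1$. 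Alternatively, and perhaps more cleanly, one uses Lemma~\ref{specialization}(ii): pick $\mathcal{B}$ of non-maximal rank mod $Q$, so $\det(\overline{\mathcal{B}})\in Q$; this is a form of degree $d-1$, and it is nonzero because $\overline{B}$ has rank $d-1$ by the specialized analogue of Lemma~\ref{rankB'} and $\overline{I}$ does fail $G_d$, so $I_{d-1}(\overline{B})\subseteq Q_{\overline{I}}$ is nonzero in degree $d-1$ --- pulling a suitable such minor back gives the desired degree-$(d-1)$ element of $Q$. Hence $\operatorname{indeg}(Q)=d-1$.

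Finally, the properness of $\langle I_1(\tt\cdot\varphi), I_d(B)\rangle\subset\langle I_1(\tt\cdot\varphi), Q\rangle$ and the failure of the expected form follow formally: the generators $I_d(B)$ have degree $d$ in $\tt$, so modulo $I_1(\tt\cdot\varphi)$ they contribute nothing in $\tt$-degree $d-1$, whereas $Q$ has a nonzero element in $\tt$-degree $d-1$ by the previous paragraph; since $\mathcal{J}=\langle I_1(\tt\cdot\varphi), Q\rangle$ by part (i), the ideal $\langle I_1(\tt\cdot\varphi), I_d(B)\rangle$ cannot equal $\mathcal{J}$, so $\mathcal{R}_R(I)$ is not of the expected form.

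\textbf{Main obstacle.} The crux is part (i): making rigorous the descent-and-lift argument through $\partial_{\mathcal{B}_i}$, in particular checking that \emph{every} torsion relation upstairs is accounted for by the $r$ matrices $\mathcal{B}_i$ together, rather than just a single one, and controlling how bihomogeneity and the substitution $x_1\mapsto b_2x_2+\cdots+b_dx_d$ interact. One must be careful that the isomorphism \eqref{old_isom} is only an $R[\tt]/Q$-algebra isomorphism onto a Rees algebra over $k[\tt]/Q$, so the comparison with the $\overline{I}$-picture has to be done after a further specialization, and the compatibility of these two operations (quotient by $Q$, then specialize by $\mathfrak{f}$) needs to be established cleanly.
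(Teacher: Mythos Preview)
Your overall strategy for (i) --- specialize to $\overline{I}$, invoke Morey--Ulrich there, and lift back through the isomorphisms $\partial_{\mathcal{B}_i}$ --- matches the paper's. However, you are missing the organizing device that makes the lift work: the paper argues by \emph{induction on $p+q$}, the total bidegree of a bihomogeneous $\mathbf{f}\in\mathcal{J}$. After writing $\overline{\mathbf{f}}$ in terms of $I_1(\tt\cdot\overline{\varphi})$ and the minors $\det(\overline{\mathcal{B}})=\mathfrak{f}(\boldsymbol{\delta}_{\mathcal{B}})$, one lifts to obtain
\[
\mathbf{f}-\mathbf{g}-\mathfrak{q}=\sum_{i=1}^{r}\mathfrak{c}_i(\xx,\tt)\,\mathfrak{f}(\boldsymbol{\delta}_i)+\mathfrak{p}(\xx,\tt)\,\mathfrak{f},
\]
with $\mathbf{g}\in I_1(\tt\cdot\varphi)$ and $\mathfrak{q}\in Q$. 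The point you do not supply is how to ``assemble'' the information from the $r$ isomorphisms: applying $\partial_{\mathcal{B}_1}$ and pulling back replaces $\mathfrak{f}(\boldsymbol{\delta}_1)$ by $\mathfrak{f}$, and bidegree separation then forces $\mathfrak{c}_1(\xx,\tt)\mathfrak{f}\in\mathcal{J}$; since $\mathfrak{f}\notin\mathcal{J}$ (it is a nonzero element of $R$), one gets $\mathfrak{c}_1\in\mathcal{J}$, and now $\mathfrak{c}_1$ has total bidegree $p+q-(d-1)<p+q$, so the inductive hypothesis applies. One then iterates with $\partial_{\mathcal{B}_2},\ldots,\partial_{\mathcal{B}_r}$. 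Without this induction, your sentence ``running $\mathcal{B}_i$ over all $r$ matrices and assembling'' is not a proof; you correctly flag this as the main obstacle, but the missing idea is precisely this bidegree descent.

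For (ii), your lower bound via Proposition~\ref{initial-degree} is fine. Your argument for the upper bound, however, has a gap: you assert that some $d\times(d-1)$ submatrix $\mathcal{B}$ of $B$ has rank $<d-1$ modulo $Q$, but the justification (``$I$ fails $G_d$, so the generic rank of $B$ modulo $Q$ cannot be as large\ldots'') is heuristic, and the claim $u<n-d$ is not even guaranteed (one only has $u\leq n-d$). The paper instead exploits the explicit block form \eqref{B}: the submatrix $\tilde{B}=\left[\begin{smallmatrix}t_u&\mathbf{0}\\ *&B'\end{smallmatrix}\right]$ satisfies $I_d(\tilde{B})=t_u\,I_{d-1}(B')\neq 0$; since $I_d(B)\subset\mathcal{J}\cap k[\tt]=Q$ by part (i), primality of $Q$ together with $t_u\notin Q$ forces $I_{d-1}(B')\subset Q$, exhibiting nonzero degree-$(d-1)$ elements. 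Your alternative route via $\det(\overline{\mathcal{B}})$ would need the same missing fact to get off the ground.
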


\begin{proof}
	(i) As discussed priorly, the goal is to prove that $\mathcal{J}\subseteq \langle I_1(\tt\cdot\varphi), Q\rangle$. 
	The strategy will consist in  inducting on the sum $p+q$, where $(p,q)$ is the bidegree of a bihomogeneous element $\mathbf{f}\in \mathcal{J}$.
	
	Clearly, $\mathbf{f}\in \langle I_1(\tt\cdot\varphi), Q\rangle$ for values $p+q\leq 2$, which we take as first step of the inductive procedure.
	
	Suppose then that $p+q\geq 3$. Consider the canonical commutative diagram below
	$$
	\begin{array}{ccccccc}
		0 \to & \mathcal{J} & \to & R[\tt] & \to & \mathcal{R}(I) & \to 0 \\
		& \downarrow &  & \downarrow &  & \downarrow & \\
		0 \to & \overline{\mathcal{J}} & \to & \overline{R}[\tt] & \to & \mathcal{R}_{\overline{R}}(\overline{I}) & \to 0 
	\end{array},
	$$
	where the downarrows are surjective. 
	
	Applying Lemma \ref{specialization} (i) yields $\overline{\mathbf{f}}\in 
	\langle I_{1}(\tt\cdot\overline{\varphi}),I_{d-1}(\overline{B})\rangle= \overline{\mathcal{J}}$.
	Say, $\overline{\mathbf{f}}=\overline{\mathbf{g}}+\overline{\mathbf{h}}$, where $\overline{\mathbf{g}}\in I_1(\tt\cdot\overline{\varphi})$ and $\overline{\mathbf{h}}\in I_{d-1}(\overline{B})$. 
	
	By Lemma~\ref{specialization} (ii), $\overline{\mathbf{h}}$ has a representative back in $\mathcal{J}\subset R[\tt]$ of the form
	$$	\mathbf{h}= \mathfrak{q}+ \mathfrak{c}_1(\xx,\tt)\mathfrak{f}(\boldsymbol\delta_1)+\cdots+\mathfrak{c}_r(\xx,\tt)\mathfrak{f}(\boldsymbol\delta_r),$$
	for certain forms $\mathfrak{c}_i(\xx,\tt)\in R[\tt],$ and where $\mathfrak{q}$ is a linear combination with coefficients in $R[\tt]$  of the elements $\mathfrak{f}(\boldsymbol{\delta}_{\mathcal{B}})$ such that   ${\rm rk\,}_{\mathcal{F}(I)}\mathcal{B}< d-1;$ in particular, by Lemma~\ref{specialization} (ii), the form $\mathfrak{q}$ belongs to $Q$. 
	
	Since minors commute with homomorphisms, lifting back to $R[\tt]$ through the above diagram yields 
	\begin{equation}\label{Fformat}
		\mathbf{f}-\mathbf{g}=\mathfrak{q}+ \mathfrak{c}_1(\xx,\tt)\mathfrak{f}(\boldsymbol\delta_1)+\cdots+\mathfrak{c}_r(\xx,\tt)\mathfrak{f}(\boldsymbol\delta_r)+\mathfrak{p}(\xx,\tt)\mathfrak{f},
	\end{equation}
	for a suitable bihomogeneous form $\mathfrak{p}(\xx,\tt)\in R[\tt].$ 
	
	But both $\mathbf{g}$ and $\mathfrak{q}$ belong to $\langle I_1(\tt\cdot\varphi), Q\rangle$, so it is enough to show that 
	$$\mathbf{f}_1:=\mathfrak{c}_1(\xx,\tt)\mathfrak{f}(\boldsymbol\delta_1)+\cdots+\mathfrak{c}_r(\xx,\tt)\mathfrak{f}(\boldsymbol\delta_r)+\mathfrak{p}(\xx,\tt)\mathfrak{f}\in \langle I_1(\tt\cdot\varphi), Q\rangle.$$

	At any rate, ${\bf f}_1\in\mathcal{J}.$
	Thus, for any $1\leq i\leq r$, applying the isomorphism $\partial_{\mathcal{B}_i}$ in (\ref{old_isom}) to ${\bf f}_1,$  lands out in $Q$.
	We now take $i=1$.
	Since the restriction of $\partial_{\mathcal{B}_1}$ is 
	 ${\bf x}\mapsto \boldsymbol{\delta}_1$, we get 
	$$\mathfrak{c}_1(\boldsymbol{\delta}_1,\tt)\mathfrak{f}(\boldsymbol\delta_1)+\mathfrak{c}_2(\boldsymbol{\delta}_1,\tt)\mathfrak{f}(\boldsymbol\delta_2)+\cdots+\mathfrak{c}_r(\boldsymbol{\delta}_1,\tt)\mathfrak{f}(\boldsymbol\delta_r)+\mathfrak{p}(\boldsymbol{\delta}_1,\tt)\mathfrak{f}(\boldsymbol{\delta}_1)\in Q.$$
	Now,  back by  the inverse map to $\partial_{\mathcal{B}_1}$:
	$$\mathfrak{c}_1(\xx,\tt)\mathfrak{f}+\underbrace{\mathfrak{c}_2(\xx,\tt)\mathfrak{f}(\boldsymbol\delta_2)+\cdots+\mathfrak{c}_r(\xx,\tt)\mathfrak{f}(\boldsymbol\delta_r)+\mathfrak{p}(\xx,\tt)\mathfrak{f}}_{=:{\bf f}_2(\xx,\tt)} \in\mathcal{J}.$$

	As $\mathbf{f}$ is bihomogeneous of bidegree $(p,q)$, its  summands have to be of bidegree $(p,q)$ as well.
	Thus, $\mathfrak{p}$ is bihomogeneous of bidegree $(p-1,q),$ and since a minor of $\boldsymbol{\delta}_i$ has degree $d-1$ in $\tt$, then
	$\mathfrak{c}_1(\xx,\tt),\ldots,\mathfrak{c}_r(\xx,\tt)$ are each bihomogeneous of bidegree $(p,q-d+1)$. 
	
	Moreover, as $\mathcal{J}$ is a bihomogeneous ideal,  and the respective bidegrees $(p,q)$ and $(p+1,q-d+1)$ of  $\mathbf{f}_2(\xx,\tt)$ and  $\mathfrak{c}_1(\xx,\tt)\mathfrak{f}$ are distinct, then both $\mathbf{f}_2(\xx,\tt)$ and  $\mathfrak{c}_1(\xx,\tt)\mathfrak{f}$ belong to $\mathcal{J}.$ In addition, $\mathfrak{f}\notin \mathcal{J},$ hence $\mathfrak{c}_1(\xx,\tt)\in\mathcal{J}.$ 
	
	Now, the sum of the bidegrees of $\mathfrak{c}_1(\xx,\tt)$ is $p+(q-d+1)=p+q-(d-1)<p+q$ since $d\geq 2$, hence $\mathfrak{c}_1(\xx,\tt)\in \langle I_1(\tt\cdot\varphi), Q\rangle$ by the inductive hypothesis. Then, using the fact that $\mathbf{f}_2(\xx,\tt)\in \mathcal{J}$ we can apply to it the isomorphism  $\partial_{\mathcal{B}_2}$  to conclude as before that $\mathfrak{c}_2(\xx,\tt)\in  \langle I_1(\tt\cdot\varphi), Q\rangle.$  In  this manner we eventually obtain that $\{\mathfrak{c}_1(\xx,\tt),\ldots,\mathfrak{c}_r(\xx,\tt)\}\subset \langle I_1(\tt\cdot\varphi), Q\rangle$, hence $\mathfrak{p}(\xx,\tt)\in \langle I_1(\tt\cdot\varphi), Q\rangle$ as well, as was to be shown.

	(ii) Consider the  following $d\times (n-u)$ submatrix  of $B$ 
	$$\tilde{B}=\left[\begin{matrix}t_u&\boldsymbol0\\\ast&B'\end{matrix}\right].$$ 
	By Lemma~\ref{rankB'}, $I_{d-1}(B')\neq 0$. Therefore, $I_{d}(\tilde{B})=t_uI_{d-1}(B')\neq 0$ as well.
	But, 
	$I_{d}(\tilde{B})\subset I_d(B)$, while the latter is contained in the bihomogeneous defining ideal  of $\mathcal{R}_R(I)$.
	Therefore, by item (i), one must have an inclusion $I_d(B)\subset Q$.  Since $Q$ is a prime ideal generated in degree $\geq d-1$, we must conclude that  $I_{d-1}(B')\subset Q$. This shows that the initial degree of $Q$ is $d-1$.
	In particular, as $I_d(B)$ lives in degree $d$, the inclusion  $\langle I_1(\tt\cdot\varphi), I_d(B)\rangle \subset \langle I_1(\tt\cdot\varphi), Q\rangle$ is proper. 
\end{proof}

			\bibliographystyle{amsalpha}

	\hspace{1cm} {\sc Thiago Fiel} \hspace{6.4cm} {\sc Zaqueu Ramos}\\
	Departamento de Matem\'atica, CCEN \hspace{2.5cm} Departamento de Matem\'atica, CCET\\
	Universidade Federal de Pernambuco  \hspace{2.5cm} Universidade Federal de Sergipe\\
	50740-560 Recife, PE, Brazil   \hspace{3.9cm}  49100-000 S\~ao Cristov\~ao, SE, Brazil\\
	thiagofieldacostacabral@gmail.com \hspace{2.88cm}  zaqueu@mat.ufs.br\\
	
	
	\begin{center}
		\noindent {\sc Aron Simis}\\
		Departamento de Matem\'atica, CCEN\\ 
		Universidade Federal de Pernambuco\\ 
		50740-560 Recife, PE, Brazil\\
		aron.simis@ufpe.br
	\end{center}

			\end{document}